\documentclass[a4paper]{article}

\usepackage[all]{xy}\usepackage[latin1]{inputenc}        
\usepackage[dvips]{graphics,graphicx}
\usepackage{amsfonts,amssymb,amsmath,color,mathrsfs, amstext}
\usepackage{amsbsy, amsopn, amscd, amsxtra, amsthm,authblk, enumerate}
\usepackage{upref}
\usepackage{geometry}
\geometry{left=2.5cm,right=2.5cm,top=3cm,bottom=3cm}
\usepackage[displaymath]{lineno}
\usepackage{float}
\usepackage{yhmath}
\usepackage[colorlinks,
            linkcolor=red,
            anchorcolor=red,
            citecolor=red
            ]{hyperref}

\def\epsilon{\varepsilon}

\DeclareMathOperator{\loc}{loc}
\DeclareMathOperator{\argmin}{argmin}

\newcommand{\G}{\mathrm{grad}}

\newtheorem{theorem}{Theorem}[section]
\newtheorem{lemma}{Lemma}[section]
\newtheorem{proposition}{Proposition}[section]
\newtheorem*{proposition*}{Proposition}
\newtheorem{corollary}{Corollary}[section]
\newtheorem*{corollary*}{Corollary}
\newtheorem{definition}{Definition}[section]
\newtheorem*{definitions*}{Definitions}
\newtheorem*{conjecture*}{\bf Conjecture}

\newtheorem*{example*}{\bf Example}
\theoremstyle{remark}
\newtheorem{remark}{\bf Remark}[section]

\newtheorem{assumption}{Assumption}[section]

\numberwithin{equation}{section}

\begin{document}

\title{A discretization of Caputo derivatives with application to time fractional SDEs and gradient flows}

\author[1]{Lei Li\thanks{leili2010@sjtu.edu.cn}}
\author[2]{Jian-Guo Liu\thanks{jliu@phy.duke.edu}}
\affil[1]{School of Mathematical Sciences, Institute of Natural Sciences, MOE-LSC, Shanghai Jiao Tong University, Shanghai, 200240, P. R. China.}
\affil[2]{Department of Mathematics and Department of Physics, Duke University, Durham, NC 27708, USA.}

\date{}

\maketitle

\begin{abstract}
We consider a discretization of Caputo derivatives resulted from deconvolving a scheme for the corresponding Volterra integral. Properties of this discretization, including signs of the coefficients, comparison principles, and stability of the corresponding implicit schemes, are proved by its linkage to Volterra integrals with completely monotone kernels. We then apply the backward scheme corresponding to this discretization to two time fractional dissipative problems, and these implicit schemes are helpful for the analysis of the corresponding problems. In particular, we show that the overdamped generalized Langevin equation with fractional noise has a unique limiting measure for strongly convex potentials and establish the convergence of numerical solutions to the strong solutions of time fractional gradient flows. The proposed scheme and schemes derived using the same philosophy can be useful for many other applications as well.
\end{abstract}

\section{Introduction}

Continuous time fractional calculus has been used widely in physics and engineering for memory effect, viscoelasticity, porous media etc \cite{kst06, diethelm10,lll17}. Among them, the Caputo's and Riemann-Liouville's definitions are very popular for power law memory kernels \cite{caputo1967linear, kst06, diethelm10,liliu2018}. Caputo's definition of fractional derivatives was first introduced in \cite{caputo1967linear} to study the memory effect of energy dissipation for some anelastic materials and soon became a useful modeling tool in engineering and physical sciences for nonlocal interactions in time (see \cite{kouxie04,fle2,fle4}). Compared with Riemann-Liouville derivatives, Caputo derivatives remove the singularities at the origin and are suitable for initial value problems \cite{liliu2018}.  There are other models that have power law kernels with certain cutoffs (especially exponential cutoffs) so that they can give transitions between different behaviors \cite{vergara2008,sandev2017generalized,liemert2017generalized,chen2018tempered,molina2018crossover}. In this paper, we are interested in discretizing gradient type time fractional dissipative problems. Moreover, we desire to use the numerical discretization to investigate the properties of the solutions of the time continuous dissipative problems.  The first problem is the time fractional stochastic differential equation (fractional SDE) of dissipative type, which is the overdamped limit of the generalized Langevin equation with fractional noise.  Another problem is the time fractional gradient flows in a separable Hilbert space. 

In some complex systems, the Langevin equations cannot give accurate predictions and the interaction between the system and the surrounding heat bath can no longer be modeled by white noise \cite{kubo66,kouxie04}. The generalized Langevin equation (GLE)
\begin{gather}\label{eq:gle}
\begin{split}
&\dot{X}=v,\\
& m\dot{v}=-\nabla V(X)-\int_{t_0}^t\gamma(t-s)v(s)\,ds+\eta(t)
\end{split}
\end{gather} 
was then proposed by Mori and Kubo \cite{mori65,kubo66} to describe such complex systems with memory. In this model, $-\int_{t_0}^t\gamma(t-s)v(s)\,ds$ is the friction acting on the system we consider and $t_0$ is the point where the memory is counted from (usually $0$ or $-\infty$). The friction is the mean effect of the interaction between the system and the heat bath. The last term $\eta(t)$ is the noise which is the fluctuation part of the interaction between the system and the heat bath. Later, the GLE was recovered by dimension reduction from Ford-Kac and Kac-Zwanzig models using Mori-Zwanzig projection (\cite{zwanzig73,coarsegrain1_eric,coarsegrain3_karniadakis,leimkuhler2018}).  
In the GLE models, the noise $\eta$ and the kernel for the friction $\gamma(\cdot)$ satisfy the so-called fluctuation-dissipation theorem (FDT)
\begin{gather}\label{eq:fdt}
\mathbb{E}(\eta(t) \eta(t+\tau))= k T \gamma(|\tau|),~ \forall \tau\in \mathbb{R}.
\end{gather}
Intuitively, the random force and the frictional kernel all originate from the interaction between the system and the surrounding environment. When the energy balance is reached, they must be related for the system to achieve the correct temperature. In \cite{kouxie04}, Kou and Xie considered the GLE with fractional Gaussian noise to explain the subdiffusive behaviors for a protein molecule in solution. Later, this model was studied by many authors \cite{kou08,deng2009}. The fractional Gaussian noise is the distributional derivative of the fractional Brownian motion $B_H$ (see \cite{nualart06}  and section \ref{sec:formalfsde} for more details)
\begin{gather}
\eta=\sigma \dot{B}_H(t).
\end{gather}
Using FDT \eqref{eq:fdt} and considering the overdamped limit, we obtain the fractional SDE as the overdamped GLE (see section \ref{sec:formalfsde} for a simple derivation and the rigorous definition):
\begin{gather}
D_c^{2-2H}X=-\nabla V(X)+\sigma dB_H,
\end{gather}
where $D_c^{\alpha}$ is the Caputo derivative (see section \ref{sec:setup} for more explanation). 
In \cite{lll17}, the fractional SDE has been studied theoretically. If the force is linear, it was shown that the process converges in law to a unique limiting measure. Moreover, if the FDT is satisfied, the limiting measure is the Gibbs measure. The general potential $V$ cases seem hard to justify. In  \cite{fang2018}, numerical methods have been designed for the overdamped GLE and the numerical results there give positive evidence. One of our goals in this paper is to use the numerical schemes to prove that the limiting measure is unique if the potential $V$ is strongly convex.

Though there might not be strong physical interpretation, the time fractional gradient flow is of its own mathematical interest and can be used for new phase field models (see \cite{chen2002phase,liu2003phase} for the phase field models). In particular, consider a separable Hilbert space $H$ and a functional $\phi: H\to \mathbb{R}$ that is lower semicontinuous. The time fractional gradient flow we consider is
\begin{gather}\label{eq:strongsol}
D_c^{\alpha}u\in -\partial\phi(u),~~u(0)=u_0,
\end{gather}
where the Frechet subdifferential $\partial\phi$ at $u$ is a set defined as
\begin{gather}
\partial \phi(u):= \left\{ \xi\in H: 
\liminf_{w\to u}\frac{\phi(w)-\phi(u)-\langle \xi, w-u\rangle }{|w-u|} \ge 0 \right\}.
\end{gather}
If $\partial \phi$ contains a single point $\xi$, then we define $\G\phi(u):=\xi$. We aim to investigate the discretization using our scheme in this paper and establish error estimates.  For related fractional gradient flow, one can see \cite{vergara2008} where the memory kernel takes the form $t^{-\gamma}e^{-\mu t}$ with exponential decay. If $\phi$ is convex, then $\partial\phi$ is accretive and some related Volterra equations have been discussed in \cite{cn78,cn81}, where the existence of generalized solutions have been established using the Yosida approximations. The equation we will consider is not included in these papers.

Numerical discretizations of time fractional differential equations and related equations have already been investigated by many authors \cite{lubich1986,linxu2007,lixu2009space,nochetto2016,jin2017correction,jin2018discrete,atangana2018new,shens2019a,shens2019b}.  In particular, the authors of \cite{lixu2009space,nochetto2016} applied the $L^1$ schemes, which approximate the Caputo derivative directly, for several dissipative problems. In \cite{jin2017correction}, some corrections are made for the first $k-1$ steps so that the non-smoothness at $t=0$ does not pollute the desired accuracy of the schemes. In \cite{shens2019a,shens2019b}, some spectral methods have been developed for fractional differential equations. Moreover, in \cite{fllx18}, some comparison principles for the discrete fractional equations have been established.
Unfortunately, using these discretizations to study the fractional SDE and time fractional gradient flows is not appropriate because the time continuous problems are not well understood yet. 
Our approach is to consider the discretization of the integral formulation first and apply the deconvolution (see \cite{liliu2018note}) to obtain the discretization of the Caputo derivatives in differential form. Since the integral formulation is more suitable for passing the limit, we are then able to conclude the important results regarding the time-continuous problems and establish some error estimates. Note that the new scheme is not just discretization of Volterra integrals since some important properties will be proved based on the de-convolved sequence, which seems very hard using the discretization of the integral form. Besides the problems considered in this paper, the scheme proposed here or schemes derived
using the same philosophy may be applied for other problems \cite{molina2018crossover,owolabi2017mathematical}.

The rest of the paper is organized as follows. In section \ref{sec:setup}, we give the basic notations and propose the discretization of the Caputo derivatives using deconvolution. In section \ref{sec:property}, we prove some important properties of the new discretization. Section \ref{sec:fsde} and section \ref{sec:fgradientflow} are devoted to fractional SDE and time fractional gradient flows.  In particular, we show that the overdamped GLE with fractional noise has a unique limiting measure for strongly convex potentials; we also establish some error estimates for the strong solutions of time fractional gradient flows.

\section{Notation and setup for the discretization}\label{sec:setup}

Let $B$ be a Banach space. Consider the following equation for a mapping: $X: [0, T]\to B$:
\begin{gather}\label{eq:caputoexp}
D_c^{\alpha}X(t)=f(t),
\end{gather}
where $f: [0, T]\to B$ is some mapping. $D_c^{\alpha}$ represents the Caputo derivative
of order $\alpha\in (0, 1)$ (\cite{kst06, diethelm10}). If $X(\cdot)$ is regular enough, for example, absolutely continuous, the Caputo derivative traditionally is defined as
\begin{gather}\label{eq:caputotradition}
D_c^{\alpha}X(t)=\frac{1}{\Gamma(1-\alpha)}\int_0^t \frac{\dot{X}(s)}{(t-s)^{\alpha}}\,ds.
\end{gather}
In \cite{liliu2018,liliu2018compact}, a generalized definition of Caputo derivative based on convolution groups was proposed.  To explain this generalized defintion, we first recall the distributions $\{g_\beta\}$ in \cite{liliu2018}:
\begin{gather}\label{eq:g}
g_{\beta}(t)=
\begin{cases}
\frac{1}{\Gamma(1+\beta)}D\left(\theta(t)t^{\beta}\right),& \beta\in (-1, 0)\\
\frac{1}{\Gamma(\beta)}t_+^{\beta-1}, & \beta>0.
\end{cases}
\end{gather} 
Here $\theta(t)$ is the standard Heaviside step function, $\Gamma(\cdot)$ is the gamma function, $t_+=\theta(t)t=\max(t, 0)$, and $D$ means the distributional derivative on $\mathbb{R}$. Indeed, $g_{\beta}$ can be defined for $\beta\in \mathbb{R}$ (see \cite{liliu2018}) so that $\{g_{\beta}: \beta\in\mathbb{R}\}$ forms a convolution group. In particular, we have 
\begin{gather}\label{eq:grouprelation}
g_{\beta_1}*g_{\beta_2}=g_{\beta_1+\beta_2}.
\end{gather}
 Note that the support of $g_{\beta_i}$ ($i=1,2$) is bounded from left, so the convolution is well-defined. 
\begin{definition}[\cite{liliu2018,liliu2018compact}] \label{def:caputo}
Let $0<\alpha<1$. Consider $X\in L_{\loc}^1([0, T),B)$. Given $X_0\in B$, we define the $\alpha$th order generalized Caputo derivative of $X$, associated with initial value $X_0$, to be a distribution as $D_c^{\alpha}X: C_c^{\infty}(-\infty, T; \mathbb{R})\to B$ with support in $[0, T)$, given by 
\begin{gather}\label{eq:generalizeddef}
D_c^{\alpha}X=g_{-\alpha}*\Big((X-X_0)\theta(t)\Big).
\end{gather}
If$\ \lim_{t\to 0+}\frac{1}{t}\int_0^t\|X(s)-X_0\|_Bds=0$, we call $D_c^{\alpha}X$ the Caputo derivative of $X$.
\end{definition}
The weak Caputo derivatives  in \cite{liliu2018compact} for mappings in general Banach spaces was defined through a dual equality using right derivatives. One can verify easily that 
the one in \cite{liliu2018compact}  agrees with Definition \ref{def:caputo}.
This generalized definition appears complicated. However, it is theoretically more convenient, since it allows us to take advantage of the underlying group structure. In fact, making use of the convolutional group structure \eqref{eq:grouprelation} (see \cite{liliu2018} for more details), it is straightforward to convert \eqref{eq:caputoexp} with \eqref{eq:generalizeddef} into the Volterra type equation
\begin{gather}\label{eq:Voltter}
X(t)=X_0+\frac{1}{\Gamma(\alpha)}\int_0^t (t-s)^{\alpha-1}f(s)\,ds.
\end{gather}
Indeed \eqref{eq:Voltter} is well-known for regular enough $f$; see \cite[Lemma 2.3]{df02}. The theory in  \cite{liliu2018,liliu2018compact} tells us that it still holds for $f$ to be distributions. 

For absolutely continuous functions, Definition \ref{def:caputo} reduces to \eqref{eq:caputotradition}. 
In this paper, we sometimes need the generalized definition, Definition \ref{def:caputo}, and its equivalence to \eqref{eq:Voltter} since we need to consider the Caputo derivative of a continuous function later.

For numerical setup, we fix the terminal time $T$ and consider time step
\begin{gather}
k=T/N.
\end{gather}
Define $t_n=nk$. We will use $X_n$ to represent the numerical solution at $t_n$.

\subsection{Discretization of the fractional derivatives: two options}

Depending on whether we discretize \eqref{eq:caputoexp} or \eqref{eq:Voltter}, we can possibly have different schemes (see \cite[Section 6]{fllx18} for some relevant discussions).
Discretization of \eqref{eq:Voltter} and deconvolution yields a discretization of the Caputo derivative, whose implicit scheme turns out very useful for studying two important time fractional dissipative problems. In particular, we can conclude the asymptotic behavior of the fractional SDEs and study the time fractional gradient flows in separable Hilbert spaces ( see sections \ref{sec:fsde} and \ref{sec:fgradientflow} respectively).

\subsubsection{Discretization of the differential form}

Discretizing \eqref{eq:caputoexp} directly is well studied in literature (see \cite{diethelm1997,linxu2007}). The $L^1$ scheme in \cite{diethelm1997,linxu2007} is widely used in applications due to the good sign of the coefficients (see \cite{nochetto2016,liumazhou2017}). The scheme is given by
\begin{gather}
(\bar{\mathcal{D}}^{\alpha}X)_n
=k^{-\alpha}(\bar{c}_0X_n-\bar{c}_1X_{n-1}-\ldots-\bar{c}_n^n X_0).
\end{gather}
Here, the coefficients are given by
\begin{gather}
\begin{split}
& \Gamma(2-\alpha)\bar{c}_0=1,\\
& \Gamma(2-\alpha)\bar{c}_j=-((j+1)^{1-\alpha}-2j^{1-\alpha}+(j-1)^{1-\alpha}),~~1\le j\le n-1, \\
& \Gamma(2-\alpha)\bar{c}_n^n=(n^{1-\alpha}-(n-1)^{1-\alpha})
\end{split}
\end{gather}
We have the following observations:
(i) $\bar{c}_j>0$,  $\bar{c}_n^n>0$; (ii) $\bar{c}_0-\sum_{j=1}^{n-1}\bar{c}_j-\bar{c}_n^n=0$;
(iii) 
\begin{gather}
\bar{c}_j=\frac{-1}{\Gamma(-\alpha)}j^{-1-\alpha}\left(1+O\left(\frac{1}{j}\right)\right),~
j\to\infty,\  \ \ 
\bar{c}_n^n=\frac{n^{-\alpha}}{\Gamma(1-\alpha)}\left(1+O\left(\frac{1}{n}\right)\right).
\end{gather}

\subsubsection{Discretization of the integral form and deconvolution}
Alternatively, we can consider the discretization of the integral form \eqref{eq:Voltter} and then take deconvolution to get the approximation for 
the Caputo derivative. In fact, discretizing the integral form has been well-studied in literature (see for example \cite{lubich1986,galeone2009}). Slightly different from the discretizations in these works, what we choose to do is to approximate $f$ with piecewise constant functions. Then, we take deconvolution and get the approximation to the differential form. 

To start, we approximate $f(t)$ by
\begin{gather}
\tilde{f}(t)=F_n,~~t\in (t_{n-1}, t_n].
\end{gather}
Then, \eqref{eq:Voltter} gives the following scheme
\begin{gather}\label{eq:discretefracint}
X_n-X_0=k^{\alpha}\sum_{m=1}^{n}a_{n-m}F_m =: (J_kF)_n ,
\end{gather}
where the right hand side is a discrete integral and the sequence $a$ is given by
\begin{gather}\label{eq:seqa}
a=(a_0, a_1, \ldots, a_n,\ldots)= \frac{1}{\Gamma(1+\alpha)}(1, 2^{\alpha}-1, 3^{\alpha}-2^{\alpha}, \ldots).
\end{gather}

For convenience, we define $F_0=0$ and introduce the sequence $F\in B^{\mathbb{N}}$ by
\begin{gather}
F=(F_0, F_1, \ldots, F_n, \ldots),
\end{gather}
so that for $n\ge 0$, $X_n-X_0=k^{\alpha}(a*F)_n$. The convolution between $u$ and $v$ is given by
\begin{gather}
(u*v)_n=\sum_{m=0}^n u_m v_{n-m}=(v*u)_n.
\end{gather}

Let $a^{(-1)}$ be the convolution inverse of $a$ such that
\begin{gather}
a*a^{(-1)}=a^{(-1)}*a=\delta_d :=(1, 0, 0,\ldots).
\end{gather}
Then, we obtain for $n\ge 0$ that
$k^{-\alpha}(a^{(-1)}*(X-X_0))_n=F_n$. We therefore obtain a new scheme for discretizing the Caputo derivative
\begin{gather}\label{eq:capnewdis}
(\mathcal{D}^{\alpha}X)_n=k^{-\alpha}(a^{(-1)}*(X-X_0))_n.
\end{gather}
Though equivalent to the discretization of Volterra integral, we regard this as a new scheme because some important properties (e.g. Theorem \ref{thm:implicitcomp} (2)-(3) and \eqref{eq:fractionalgradientkeyestimate}) will be proved based on this differential form \eqref{eq:capnewdis}, which will be hard using the integral form \eqref{eq:discretefracint}.

\section{Properties of the discretization}\label{sec:property}

In this section, we discuss in detail the properties of discretization \eqref{eq:capnewdis}.
One can refer to \cite{liliu2018note} for some discussion of using deconvolution to define discrete fractional calculus.

We first introduce some definitions for the discussion. We say a sequence $v=(v_0, v_1, \ldots)$ is completely monotone if $((I-S)^jv)_k\ge 0$ for any $j\ge 0, k\ge 0$ where $(Sv)_j=v_{j+1}$.  A sequence is completely monotone if and only if it is the moment sequence of a Hausdorff measure (a finite nonnegative measure on $[0,1]$)  (\cite{widder41}). Another description is given below in Lemma \ref{lmm:generating}.  The generating function of a sequence $v=(v_0, v_1, \ldots)$ is defined by 
\begin{gather}
F_v(z)=\sum_{n=0}^{\infty} v_n z^n. 
\end{gather}
Another concept we introduce is the Pick function. A function $f:\mathbb{C}_+\to\mathbb{C}$ (where $\mathbb{C}_+$ denotes the upper half plane, not including the real line) is Pick if it is analytic such that $\mathrm{Im}(z)>0\Rightarrow \mathrm{Im}(f(z))\ge 0$.  Now, we state some properties of sequences in terms of the generating functions, for which we omit the proofs.
\begin{lemma}\label{lmm:generating}
\begin{enumerate}[(1)]
\item For convolution,  $F_{u*v}(z)=F_u(z)F_v(z)$, and
$F_{v^{(-1)}}(z)=(F_v(z))^{-1}$.

\item (\cite[Corollary VI.1]{fs09}) Assume $F_v(z)$ is analytic on $\Delta :=\{z: |z|<R, z\neq 1, |\mathrm{arg}(z-1)|>\theta\}$, for some $R>1, \theta\in (0, \frac{\pi}{2})$. If $F_v(z)\sim (1-z)^{-\beta}$ as $z\to 1, z\in \Delta$ for $\beta\neq 0, -1, -2, -3, \ldots$, then $v_n \sim \frac{1}{\Gamma(\beta)}n^{\beta-1},~n\to \infty$.

\item $\lim_{n\to\infty}v_n=\lim_{z\to 1^-}(1-z)F_v(z)$.

\item (\cite{lp16}) A sequence $v$ is completely monotone if and only if the generating function $F_v(z)=\sum_{j=0}^{\infty}v_jz^j$ is a Pick function that is analytic and nonnegative on $(-\infty, 1)$. 
\end{enumerate}
\end{lemma}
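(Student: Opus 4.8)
The plan is to handle the four items separately, since they draw on quite different tools: items (1) and (3) are elementary and I would prove them directly, while (2) and (4) are the substantive analytic facts that I would reduce to the cited transfer and representation theorems.

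For (1), I would expand the Cauchy product: writing $F_{u*v}(z)=\sum_n\big(\sum_{m=0}^n u_m v_{n-m}\big)z^n$ and reindexing the double sum as $\sum_{m,k}u_m v_k z^{m+k}$ gives $F_u(z)F_v(z)$, valid wherever both series converge absolutely. The inverse identity is then immediate: since $v*v^{(-1)}=\delta_d$ and $F_{\delta_d}(z)=1$, the product rule forces $F_v(z)F_{v^{(-1)}}(z)=1$. For (3), which is the discrete Abelian (final-value) theorem, I would subtract off the limit. Assuming $v_n\to L$, set $w_n=v_n-L$ so that $w_n\to 0$ and $F_v(z)=\frac{L}{1-z}+F_w(z)$, giving $(1-z)F_v(z)=L+(1-z)F_w(z)$. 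It then suffices to show $(1-z)F_w(z)\to 0$ as $z\to 1^-$: given $\e>0$ pick $N$ with $|w_n|<\e$ for $n>N$, bound the tail by $\e(1-z)\sum_{n}z^n=\e$, and note the finite head $(1-z)\sum_{n\le N}w_n z^n$ vanishes as $z\to1^-$ because it is a polynomial multiplied by $(1-z)$. Letting $\e\to0$ finishes the argument.

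Items (2) and (4) are where the real work lies, and here I would lean on the cited results. For (2), the coefficient asymptotics follow from singularity analysis: one writes $v_n=\frac{1}{2\pi i}\oint F_v(z)z^{-n-1}\,dz$ by Cauchy's formula and deforms the contour onto a Hankel-type path hugging the singularity at $z=1$; feeding in $F_v(z)\sim(1-z)^{-\beta}$ and recognising the resulting contour integral as the Hankel representation of $1/\Gamma(\beta)$ produces $v_n\sim\frac{1}{\Gamma(\beta)}n^{\beta-1}$. The analyticity hypothesis on the domain $\Delta$ is exactly what licenses the contour deformation, and this is Corollary VI.1 of \cite{fs09}. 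For (4), I would start from the stated equivalence between complete monotonicity and the Hausdorff moment property, namely $v_n=\int_0^1 x^n\,d\mu(x)$ for a finite nonnegative $\mu$. Interchanging sum and integral (Fubini, $|z|<1$) yields the integral representation $F_v(z)=\int_0^1(1-xz)^{-1}\,d\mu(x)$.

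From this representation the easy direction is a direct check: for $\mathrm{Im}(z)>0$ one computes $\mathrm{Im}\big((1-xz)^{-1}\big)=x\,\mathrm{Im}(z)/|1-xz|^2\ge0$, so integrating against $\mu\ge 0$ shows $F_v$ is Pick, and for real $z<1$ one has $1-xz>0$ on $x\in[0,1]$, giving analyticity and nonnegativity on $(-\infty,1)$. The converse---that any Pick function analytic and nonnegative on $(-\infty,1)$ admits such a representation---is the genuinely hard half and is exactly the content of \cite{lp16}; it rests on the Nevanlinna integral representation of Pick functions, whose support must then be pinned to $[0,1]$ using the behaviour on $(-\infty,1)$. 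I expect this converse, together with the contour estimates underlying (2), to be the main obstacle, which is precisely why the statement cites \cite{fs09} and \cite{lp16} rather than reproving them, and why the lemma is stated with the proofs omitted.
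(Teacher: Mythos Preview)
The paper explicitly omits the proof of this lemma, so there is no argument to compare against; your sketch is sound and correctly separates the elementary parts (1) and (3) from the two substantive analytic facts (2) and (4), which you rightly reduce to the cited transfer theorem in \cite{fs09} and the Pick--function characterisation in \cite{lp16}.

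One point worth flagging on item (3): you prove the Abelian direction (if $v_n\to L$ then $(1-z)F_v(z)\to L$), which is all that is true in general. The converse Tauberian direction fails without extra hypotheses---e.g.\ $v_n=(-1)^n$ gives $(1-z)F_v(z)\to 0$ while $v_n$ does not converge. The paper later invokes this lemma in the proof of Theorem~\ref{thm:schemeforlinear}(1) to conclude $X_n\to 0$ from the generating-function limit, which is the Tauberian direction; that application is in fact legitimate because the sequence there is nonnegative (one can check $X_n\ge 0$ by induction when $\lambda<0$), and for nonnegative sequences the Hardy--Littlewood Tauberian theorem supplies the missing implication. You might want to note this, since as written your proof of (3) does not cover the direction the paper actually uses.
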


For convenience, define a sequence $c=(c_0, c_1, \ldots, c_n,\ldots)$ as (see \eqref{eq:seqa} for $a$)
\begin{gather}
c_0=a_0^{(-1)},~~c_i=-a_i^{(-1)},~\forall i\ge 1.
\end{gather}
Moreover, it is convenient to introduce 
\begin{gather}
c_n^n=c_0-\sum_{i=1}^{n-1}c_i.
\end{gather}
Then,  \eqref{eq:capnewdis} can be reformulated as
\begin{gather}\label{eq:capnewdis2}
\begin{split}
(\mathcal{D}^{\alpha}X)_n & =k^{-\alpha}\Big(c_0(X_{n}-X_0)-\sum_{i=1}^{n-1}c_i (X_{n-i}-X_0)\Big)\\
&=k^{-\alpha}\Big(c_0X_{n}-\sum_{i=1}^{n-1}c_i X_{n-i}
-c_n^n X_0\Big).
\end{split}
\end{gather}
Using the result in \cite{liliu2018note}, we have the following claims.
\begin{proposition}\label{pro:coefficients}
Consider scheme \eqref{eq:capnewdis2}. The following claims hold:
\begin{enumerate}[(1)]
\item $c_i>0$ for $i\ge 0$ and $c_0=\sum_{i=1}^{\infty}c_i=\Gamma(1+\alpha)$.
Consequently, $c_n^n=\sum_{i=n}^{\infty}c_i>0$ and $c_0=\sum_{i=1}^{n-1}c_i+c_n^n$.

\item We have the following asymptotics for the coefficients.
\begin{gather}
c_j=\frac{-1}{\Gamma(-\alpha)}j^{-1-\alpha}\left(1+o(1) \right),~
j\to\infty,\  \ \ 
c_n^n=\frac{n^{-\alpha}}{\Gamma(1-\alpha)}\left(1+o(1) \right).
\end{gather}

\end{enumerate}
\end{proposition}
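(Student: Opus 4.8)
The plan is to work entirely through generating functions and the characterization of completely monotone sequences given in Lemma \ref{lmm:generating}. The starting point is the generating function of the sequence $a$ from \eqref{eq:seqa}. Since $a_j=\frac{1}{\Gamma(1+\alpha)}((j+1)^{\alpha}-j^{\alpha})$, the sequence $a$ is (up to the constant $\frac{1}{\Gamma(1+\alpha)}$) the sequence of successive differences of $(j^\alpha)$, so its generating function telescopes nicely against $(1-z)$. A direct computation gives
\begin{gather}
F_a(z)=\frac{1}{\Gamma(1+\alpha)}\,\frac{1}{1-z}\sum_{j=0}^{\infty}\big((j+1)^{\alpha}-j^{\alpha}\big)(1-z)z^{j},
\end{gather}
and recognizing the polylogarithm-type sum one identifies the dominant singular behavior $F_a(z)\sim \frac{1}{\Gamma(1+\alpha)}(1-z)^{-\alpha}\cdot\Gamma(1+\alpha)=(1-z)^{-\alpha}$ as $z\to 1$ (consistent with $a_j\sim \frac{1}{\Gamma(\alpha)}j^{\alpha-1}$ via Lemma \ref{lmm:generating}(2)). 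By Lemma \ref{lmm:generating}(1), the convolution inverse has generating function $F_{a^{(-1)}}(z)=1/F_a(z)$, and hence $F_{a^{(-1)}}(z)\sim (1-z)^{\alpha}$ near $z=1$.

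For part (1), I would argue in two stages. First, the normalization $c_0=\Gamma(1+\alpha)$ is immediate from $a_0^{(-1)}=1/a_0=\Gamma(1+\alpha)$. Next, the identity $c_0=\sum_{i=1}^{\infty}c_i=\Gamma(1+\alpha)$ is equivalent to $\sum_{i=1}^\infty a_i^{(-1)}=0$, i.e. $F_{a^{(-1)}}(1)=0$; this follows from $F_{a^{(-1)}}(z)=1/F_a(z)\to 0$ as $z\to 1^-$ because $F_a(z)\to\infty$ there. The sign statement $c_i>0$ for $i\ge 1$ (equivalently $a_i^{(-1)}<0$ for $i\ge 1$) is the crux and is where I would invoke the completely monotone machinery. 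The natural route is to show that a suitable rescaling of the sequence $c=(c_1,c_2,\dots)$, or of $(-a^{(-1)})$, is completely monotone by verifying via Lemma \ref{lmm:generating}(4) that the relevant generating function is a Pick function analytic and nonnegative on $(-\infty,1)$. Concretely, one writes $-F_{a^{(-1)}}(z)+c_0 = c_0 - 1/F_a(z)$ (the generating function of $(c_1,c_2,\dots)$ shifted appropriately) and checks the Pick property; since $F_a$ itself is, up to constants, a Pick/Stieltjes-type function arising from the completely monotone kernel $t^{\alpha-1}$, its reciprocal inherits the right mapping properties. This is the step I expect to be the main obstacle: establishing the Pick/complete-monotonicity property of the reciprocal generating function rigorously, rather than just asymptotically. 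Once $c_i>0$ for all $i\ge 1$ is known together with $\sum_{i\ge 1}c_i=c_0$, the consequences $c_n^n=\sum_{i=n}^\infty c_i>0$ and $c_0=\sum_{i=1}^{n-1}c_i+c_n^n$ are purely formal from the definitions.

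For part (2), the asymptotics follow from the singularity analysis of Lemma \ref{lmm:generating}(2) applied to $F_{a^{(-1)}}(z)\sim (1-z)^{\alpha}$, which gives $a_j^{(-1)}\sim \frac{1}{\Gamma(-\alpha)}j^{-1-\alpha}$, hence $c_j=-a_j^{(-1)}\sim \frac{-1}{\Gamma(-\alpha)}j^{-1-\alpha}$; I would need to check the transfer-theorem hypothesis that $F_{a^{(-1)}}$ extends analytically to a domain $\Delta$ of the required shape, which should be inherited from the analyticity of $F_a$ away from $z=1$ together with the fact that $F_a$ has no zeros there (so that $1/F_a$ does not develop spurious singularities inside the disk). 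The tail asymptotic $c_n^n\sim \frac{n^{-\alpha}}{\Gamma(1-\alpha)}$ then follows either by summing the term-by-term asymptotic of $c_j$ using $\sum_{j\ge n}j^{-1-\alpha}\sim \frac{1}{\alpha}n^{-\alpha}$ (and simplifying via $\frac{-1}{\Gamma(-\alpha)}\cdot\frac{1}{\alpha}=\frac{1}{\Gamma(1-\alpha)}$, using $\Gamma(1-\alpha)=-\alpha\Gamma(-\alpha)$), or more cleanly by recognizing $c_n^n=\sum_{i=n}^\infty c_i$ as a tail whose generating function $\frac{c_0-F_{c}(z)}{1-z}$-type expression behaves like $(1-z)^{\alpha-1}$ near $z=1$ and applying Lemma \ref{lmm:generating}(2) directly. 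Since the proposition explicitly says these claims follow from the results in \cite{liliu2018note}, I expect the bulk of the sign argument to be quotable from there, with the generating-function computations above supplying the asymptotics.
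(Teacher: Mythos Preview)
Your proposal is correct in outline and follows the same generating-function route as the paper, but two points of execution deserve attention.

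For the signs in (1), the paper does not verify the Pick property of the generating function of $(c_1,c_2,\dots)$ directly. Instead it observes $a_n=\int_n^{n+1}g_{\alpha}(t)\,dt$ with $g_{\alpha}(t)=t^{\alpha-1}/\Gamma(\alpha)$ completely monotone on $(0,\infty)$, which makes the sequence $\{a_n\}$ completely monotone; then \cite[Theorem~2.3]{liliu2018note} gives at once that $c_0>0$ and $(c_1,c_2,\dots)$ is completely monotone. This bypasses exactly the step you flag as the main obstacle. There is also a small logical ordering issue in your write-up: deducing $\sum_i a_i^{(-1)}=0$ from $\lim_{z\to 1^-}F_{a^{(-1)}}(z)=0$ is a Tauberian step, and the paper justifies it by first knowing the signs of $a_i^{(-1)}$ so that monotone convergence applies; you should establish the signs before the sum identity, not after.

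For (2), your plan to invoke Lemma~\ref{lmm:generating}(2) is right, but the paper makes the $\Delta$-domain hypothesis concrete rather than arguing abstractly that $F_a$ has no zeros. It sets $H(z):=F_a(z)-(1-z)^{-\alpha}=\sum d_n z^n$, notes $|d_n|\le Cn^{\alpha-2}$ (comparing $a_n$ to $[z^n](1-z)^{-\alpha}$), so $H$ is locally bounded across $z=1$, and then writes
\[
F_{a^{(-1)}}(z)=\frac{(1-z)^{\alpha}}{1+(1-z)^{\alpha}H(z)},
\]
which is manifestly analytic on a $\Delta$-domain since the denominator is close to $1$ near $z=1$. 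This is a cleaner verification of the transfer-theorem hypothesis than checking nonvanishing of $F_a$, and it is worth adopting. The $c_n^n$ asymptotic then follows by summing $c_j$, exactly as you propose.
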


\begin{proof}
(1). First of all, we recall that (see \eqref{eq:g} for the definition of $g_{\alpha}$) 
\[
a_n=\int_{n}^{n+1}g_{\alpha}(t)\,dt.
\]
Since $g_{\alpha}(\cdot)$ is completely monotone (which means $(-1)^m \frac{d^m}{dt^m}g_{\alpha}(t)\ge 0$ for any $t\in (0,\infty)$), then the sequence $a=\{a_n\}$ is completely monotone.
Since $a^{(-1)}=(c_0, -c_1, -c_2, \ldots)$,  by \cite[Theorem 2.3]{liliu2018note}, one has that
$c_0>0$ while $(c_1, c_2, \ldots)$ is a completely monotone sequence. 
The sign of $c_i$ is thus proved.

Using the explicit formula for $a_n$,  $F_{a}(z)\to \infty$ as $z\to 1^-$. Hence, we find that 
$F_{a^{(-1)}}(z)\to 0,~~z\to 1^-$.
Noting the sign of elements for $a^{(-1)}$, the monotone convergence theorem holds and thus  $\sum_{i=0}^{\infty}a_i^{(-1)}=0$. The other claims then follow accordingly.

(2). We consider the function
\[
H(z)=F_a(z)-(1-z)^{-\alpha}=: \sum_{n=0}^{\infty} d_n z^n.
\]
By \cite[Theorem VI.1]{fs09}, we have
$[z^n](1-z)^{-\alpha}=\frac{1}{\Gamma(\alpha)}n^{\alpha-1}(1+O(\frac{1}{n}))$, $n\to\infty$,
where $[z^n]F(z)$ means the coefficient of $z^n$ in the series expansion of $F(z)$ about $0$.
Hence, $|d_n|\le C\frac{1}{n^{2-\alpha}}$, and $H(z)$ is a locally bounded function (bounded on any compact set). Hence,
\[
F_{a^{(-1)}}(z)=\frac{(1-z)^{\alpha}}{1+(1-z)^{\alpha} H(z)}.
\]
Clearly, this function is analytic in $\{z: |z|<1+\epsilon, z\neq 1, |\mathrm{arg}(z-1)|>\frac{\pi}{4}\}$ for some $\epsilon>0$. 
Applying the second claim in Lemma \ref{lmm:generating} gives the asymptotics for $c_j=-a_j^{(-1)}$ ($j\ge 1$).
Using the fact $c_n^n=\sum_{i=n}^{\infty}c_i$, the asymptotics for $c_n^n$ then follows.

\end{proof}

The discrete comparison principles are important for stability of numerical schemes. Below, we prove several important comparison criteria that are helpful for the stability of the implicit schemes. For the stability of some explicit schemes, one may refer to \cite{fllx18}. 

\begin{theorem}\label{thm:implicitcomp}
Consider discretization \eqref{eq:capnewdis2}. Let $u=\{u_n\}$, $v=\{v_n\}$ and $w=\{w_n\}$ be three sequences in $\mathbb{R}^{\mathbb{N}}$, with $u_0\le v_0\le w_0$.
\begin{enumerate}[(1)]

\item (Convex functional) Suppose $E(\cdot): \mathbb{R}^d\to \mathbb{R}, X\mapsto E(X)$ is convex. Then,
\[
(\mathcal{D}^{\alpha}E(X))_n \le (\mathcal{D}^{\alpha}X)_n\cdot\nabla E(X_n).
\]

\item (Comparison principle for nonincreasing $f$) Suppose $f(s, \cdot)$ is non-increasing. Assume $u,v,w$ satisfy the discrete implicit relations
\[
(\mathcal{D}^{\alpha}u)_n\le f(t_n, u_n),~~(\mathcal{D}^{\alpha}v)_n = f(t_n, v_n),~~
(\mathcal{D}^{\alpha}w)_n \ge f(t_n, w_n).
\]
Then,  $u_n\le v_n \le w_n$.

\item (Comparison principle for Lipschitz $f$) Assume $f$ is Lipschitz continuous in the second variable with Lipschitz constant $L$. If
\[
(\mathcal{D}^{\alpha}u)_n\le f(t_n, u_n),~~(\mathcal{D}^{\alpha}v)_n = f(t_n, v_n),~~
(\mathcal{D}^{\alpha}w)_n \ge f(t_n, w_n),
\]
then for step size $k$ with $c_0>k^{\alpha}L$, $u_n\le v_n \le w_n$.

\item (Comparison principle for integral form)
Assume $f(s,\cdot)$ is non-decreasing and Lipschitz continuous in the second variable with Lipschitz constant $L$. Introduce $f_u, f_v, f_w$ by, for example, $f_u=(0, f(t_1,u_1), f(t_2,u_2), \ldots)$. If
\begin{gather*}
u_n \le u_0+(J_k f_u)_n,~~
 v_n =v_0+(J_k f_v)_n,~~ w_n \ge w_0+(J_k f_w)_n,
\end{gather*}
 then for step size $k$ with $k^{\alpha}a_0L=k^{\alpha}L/c_0<1$, $u_n\le v_n \le w_n$.
\end{enumerate}
\end{theorem}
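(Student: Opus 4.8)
The common engine for all four parts is Proposition \ref{pro:coefficients}: every $c_i$ is positive, $c_n^n=\sum_{i\ge n}c_i>0$, and $c_0=\sum_{i=1}^{n-1}c_i+c_n^n$ (and, for the integral form, every $a_j$ is positive with $a_0=1/c_0$). Exploiting the telescoping identity $c_0=\sum_{i=1}^{n-1}c_i+c_n^n$, I would first rewrite the discrete operator in the increment form
\[
(\mathcal{D}^{\alpha}X)_n=k^{-\alpha}\Big(\sum_{i=1}^{n-1}c_i(X_n-X_{n-i})+c_n^n(X_n-X_0)\Big),
\]
which exhibits $(\mathcal{D}^{\alpha}X)_n$ as a nonnegative-coefficient combination of backward increments. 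For (1) this makes the claim almost immediate: apply the tangent-line inequality of convexity to each increment, $E(X_n)-E(X_{n-i})\le\nabla E(X_n)\cdot(X_n-X_{n-i})$ and $E(X_n)-E(X_0)\le\nabla E(X_n)\cdot(X_n-X_0)$, multiply by the positive weights $c_i$ and $c_n^n$, and sum; the right-hand sides collect into $\nabla E(X_n)\cdot(\mathcal{D}^{\alpha}X)_n$. Only positivity of the coefficients is used, so no step restriction enters.

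Parts (2) and (3) I would prove by strong induction on $n$, establishing $v_n\le w_n$ (the bound $u_n\le v_n$ being symmetric), with base case $v_0\le w_0$. Writing $e_m=w_m-v_m$ and assuming $e_m\ge 0$ for all $m<n$, I would subtract the defining relations to obtain
\[
k^{-\alpha}\Big(c_0 e_n-\sum_{i=1}^{n-1}c_i e_{n-i}-c_n^n e_0\Big)\ge f(t_n,w_n)-f(t_n,v_n).
\]
Arguing by contradiction, suppose $e_n<0$. The induction hypothesis gives $-\sum_i c_i e_{n-i}\le 0$ and $-c_n^n e_0\le 0$, so the left side is at most $k^{-\alpha}c_0 e_n$. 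For (2), $f$ nonincreasing makes the right side nonnegative (since $w_n<v_n$), contradicting $k^{-\alpha}c_0 e_n<0$. For (3), the Lipschitz bound gives the right side $\ge L e_n$, whence $k^{-\alpha}c_0 e_n\ge L e_n$, i.e. $(k^{-\alpha}c_0-L)e_n\ge 0$; the hypothesis $c_0>k^{\alpha}L$ then forces $e_n\ge 0$, a contradiction.

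Part (4) follows the same inductive template but directly on the integral form, so the sign structure is now carried by the positive weights $a_j$. Subtracting and isolating the diagonal $m=n$ term,
\[
e_n\ge e_0+k^{\alpha}a_0\big(f(t_n,w_n)-f(t_n,v_n)\big)+k^{\alpha}\sum_{m=1}^{n-1}a_{n-m}\big(f(t_m,w_m)-f(t_m,v_m)\big),
\]
where the off-diagonal sum is nonnegative because $f$ is nondecreasing and $e_m\ge 0$ for $m<n$, and $e_0\ge 0$. Assuming $e_n<0$ and using the Lipschitz lower bound $f(t_n,w_n)-f(t_n,v_n)\ge L e_n$ on the diagonal term yields $e_n\ge k^{\alpha}a_0 L\,e_n$, i.e. $(1-k^{\alpha}a_0 L)e_n\ge 0$; the hypothesis $k^{\alpha}a_0 L<1$ forces $e_n\ge 0$, again a contradiction.

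I expect the main obstacle to be purely the sign bookkeeping, and in particular keeping straight why the monotonicity hypotheses point in opposite directions in (2)--(3) versus (4): in the differential form the off-diagonal weights enter with a minus sign, so a nonincreasing $f$ closes the argument, whereas in the integral form they enter with a plus sign, so a nondecreasing $f$ is required. The step-size thresholds $c_0>k^{\alpha}L$ and $k^{\alpha}a_0 L<1$ are consistent (since $a_0=1/c_0$) and are precisely the diagonal-dominance conditions under which the Lipschitz term on the diagonal can be absorbed.
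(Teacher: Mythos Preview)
Your proposal is correct and follows essentially the same approach as the paper: the increment form of $(\mathcal{D}^\alpha X)_n$ plus convexity for (1), and induction with the sign structure of the $c_i$ (resp.\ $a_j$) for (2)--(4). The only notable variation is in part (2): where you run the same induction-plus-contradiction template as in (3), the paper instead multiplies the difference inequality by the indicator $1(\xi_n\ge 0)$ to show that the positive part $\eta_n=(u_n-v_n)\vee 0$ satisfies $(\mathcal{D}^\alpha\eta)_n\le 0$ with $\eta_0=0$, forcing $\eta_n\equiv 0$; your uniform treatment of (2) and (3) is equally valid and arguably more transparent.
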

\begin{proof}
(1).  By \eqref{eq:capnewdis2}, Proposition \ref{pro:coefficients} and the convexity of $E(\cdot)$, we have
\begin{gather*}
\begin{split}
(\mathcal{D}^{\alpha}X)_n\cdot\nabla E(X_n) & =\nabla E(X_n)\cdot 
k^{-\alpha}\left( \sum_{i=1}^{n-1}(X_n-X_{n-1})+c_n^n(X_n-X_0)\right)\\
& \ge k^{-\alpha}\left(\sum_{i=1}^{n-1}(E(X_n)-E(X_{n-1}))+c_n^n(E(X_n)-E(X_0))\right)
=(\mathcal{D}^{\alpha} E(X))_n.
\end{split}
\end{gather*}

(2). Let $\xi_n=u_n-v_n$. Then,
\[
\mathcal{D}^{\alpha}\xi \le f(t_n, u_n)-f(t_n, v_n).
\]
Multiplying $1(\xi_n\ge 0)$ on both sides, and defining $\eta_n=\xi_n \vee 0=\max(\xi_n, 0)$, we have
\[
1(\xi_n \ge 0)k^{-\alpha} \Big(c_0\xi_n-\sum_{i=1}^{n-1}c_i \xi_{n-i}
-c_n^n\xi_0 \Big) \le 1(\xi_n \ge 0)(f(t_n, u_n)-f(t_n, v_n))\le 0.
\]
Since $\xi_n 1(\xi_n\ge 0)=\xi_n\vee 0=\eta_n$, $\xi_i 1(\xi_n\ge 0)\le \xi_i\vee 0=\eta_i$, one easily finds that
\[
(\mathcal{D}^{\alpha}\eta)_n \le 1(\xi_n \ge 0)(\mathcal{D}^{\alpha}\xi)_n \le 0.
\]
Since $\eta_0= 0$, one easily finds $\eta_n\le 0$, and hence $u_n\le v_n$.  It is similar to compare $v_n$ and $w_n$.

(3). We compare $u_n$ with $v_n$. We know already $u_0\le v_0$. Now, suppose $n\ge 1$ and assume for all $m\le n-1$, we have proved $u_m\le v_m$ already. We now consider $m=n$. 
\[
k^{-\alpha}c_0(u_n-v_n)\le (\mathcal{D}^{\alpha}(u-v))_n
\le f(t_n, u_n)-f(t_n, v_n)\le L |u_n-v_n|.
\]
If $u_n> v_n$, we then have $(k^{-\alpha}c_0-L)(u_n-v_n)\le 0$ which is clearly not true. Hence, induction shows that the claim is true for all $n$.
Comparing the sequence $v$ with $w$ is similar and we omit.

(4).  Direct computation shows that
\[
u_n-v_n\le u_0-v_0+k^{\alpha}a_0(f(t_n, u_n)-f(t_n, v_n))+k^{\alpha}\sum_{m=1}^{n-1}a_{n-m}(f(t_m, u_m)-f(t_m, v_m)).
\]
If we have proved that $u_m\le v_m$ for $m\le n-1$, then 
$u_n-v_n\le k^{\alpha}a_0 L|u_n-v_n|$.
The proof then follows by induction similarly as in 3. Comparing the sequence $v$ with $w$ is similar and we omit.
\end{proof}

We now consider the stability of the implicit scheme applied to the simple FODEs
\begin{gather}\label{eq:simpleode}
D_c^{\alpha}X=\lambda X,~~X(0)=x_0>0,
\end{gather}
whose solution is given by $X(t)=x_0 E_{\alpha}(\lambda t^{\alpha})$, where 
\[
E_{\alpha}(z)=\sum_{n=0}^{\infty}\frac{z^n}{\Gamma(n\alpha+1)}
\]
is the Mittag-Leffler function \cite{HMS}.
\begin{theorem}\label{thm:schemeforlinear}
Consider the implicit scheme applied on the fractional ODE \eqref{eq:simpleode}:
\begin{gather}\label{eq:simplefodescheme}
(\mathcal{D}^{\alpha}X)_n=\lambda X_n \Leftrightarrow X_n=X_0+\lambda k^{\alpha}\sum_{m=1}^{n}a_{n-m}X_m.
\end{gather}
\begin{enumerate}[(1)]
\item If $\lambda>0$ and $k^{\alpha}\lambda<c_0$, then $X(t_n) \le X_n \le X_{n+1}$. 
If otherwise $\lambda<0$, 
$\lim_{n\to\infty}X_n=0$.

\item Consider $\lambda>0$. Suppose $k_i,i=1,2$ satisfy $k_i^{\alpha}\lambda<c_0$ and $k_1=2^{m_1} k_2$ for some $m_1\in\mathbb{N}$. Let $X_n^{(i)}$ be the numerical solutions. Define the piecewise constant functions $\bar{X}_i(t)$ by $\bar{X}_i(t)=X_n^{(i)},~t\in (t_{n-1}^{(i)}, t_n^{(i)}]$ for $i=1,2$. Then,
$\bar{X}_1(t)\ge \bar{X}_2(t)$.
Consequently, there exists a constant $C(\alpha, T)>0$ such that  for any $k$ with $k^{\alpha}\lambda\le \frac{1}{2}c_0$, 
\begin{gather}\label{eq:stabilityineqa}
\sup_{n: nk\le T}X_n\le C(\alpha, T)X_0.
\end{gather}

\item When $k$ is sufficiently small, $\sup_{n: nk\le T}|X_n - X(t_n)|\le C_1(\alpha, T) k^{\alpha}$.
\end{enumerate}
\end{theorem}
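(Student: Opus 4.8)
The plan is to compare the numerical solution with the exact one through their (discrete) Volterra formulations and to close the argument with the stability bound from part (2). Writing $e_n = X_n - X(t_n)$, I would first record that the exact solution satisfies the integral equation $X(t_n) = X_0 + \frac{\lambda}{\Gamma(\alpha)}\int_0^{t_n}(t_n - s)^{\alpha-1}X(s)\,ds$ coming from \eqref{eq:Voltter}, while the scheme \eqref{eq:simplefodescheme} is exactly the product-rectangle discretization of this integral, since a direct computation gives $k^\alpha a_{n-m} = \frac{1}{\Gamma(\alpha)}\int_{t_{m-1}}^{t_m}(t_n-s)^{\alpha-1}\,ds$. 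Subtracting and inserting $X_m - X(s) = e_m + (X(t_m)-X(s))$ on each subinterval yields the error identity
\[
e_n = \lambda \sum_{m=1}^n k^\alpha a_{n-m} e_m + R_n, \qquad R_n := \lambda\sum_{m=1}^n \frac{1}{\Gamma(\alpha)}\int_{t_{m-1}}^{t_m}(t_n-s)^{\alpha-1}\big(X(t_m)-X(s)\big)\,ds,
\]
so that $R_n$ is a pure consistency error arising from freezing $X$ to its right-endpoint value on each step.

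The second, and main, step is to bound $R_n$; this is where the singular behaviour of the solution near $t=0$ enters and forces the rate $k^\alpha$. From $X(t)=x_0E_\alpha(\lambda t^\alpha)$ one has $X'(t)=x_0\sum_{n\ge1}\frac{\lambda^n t^{n\alpha-1}}{\Gamma(n\alpha)}$, and factoring out the most singular power gives $|X'(t)|\le C_X\,t^{\alpha-1}$ on $(0,T]$ with $C_X$ depending only on $x_0,\lambda,\alpha,T$. Hence $\omega_m:=\sup_{s\in(t_{m-1},t_m]}|X(t_m)-X(s)|$ satisfies $\omega_1\le C k^\alpha$ and $\omega_m\le C k\,t_{m-1}^{\alpha-1}=Ck^\alpha(m-1)^{\alpha-1}$ for $m\ge2$. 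Bounding the $m$-th integral in $R_n$ by $\omega_m\, k^\alpha a_{n-m}$ and telescoping the weights, $\sum_{m=1}^n k^\alpha a_{n-m}=t_n^\alpha/\Gamma(1+\alpha)\le T^\alpha/\Gamma(1+\alpha)$, together with $(m-1)^{\alpha-1}\le1$, I obtain a uniform bound $|R_n|\le C'' k^\alpha$ for all $n$ with $nk\le T$.

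The third step is a discrete fractional Gronwall argument, for which I would use part (2) as a black box rather than reproving stability. Assuming $\lambda\ne0$ (the case $\lambda=0$ being trivial) and replacing $\lambda$ by $|\lambda|$, let $y$ solve the homogeneous scheme $y_n=y_0+|\lambda|\sum_{m=1}^n k^\alpha a_{n-m}y_m$ with $y_0=C''k^\alpha$. Taking $k$ small enough that $|\lambda|k^\alpha a_0<1$ (equivalently $k^\alpha|\lambda|<c_0$), isolating the $m=n$ term in both the identity for $y_n$ and the inequality $|e_n|\le C''k^\alpha+|\lambda|\sum_{m=1}^n k^\alpha a_{n-m}|e_m|$, and dividing by the positive factor $1-|\lambda|k^\alpha a_0$, a straightforward induction (using $a_j\ge0$ and $|e_0|=0\le y_0$) gives $|e_n|\le y_n$ for all $n$. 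Finally part (2), applied with coefficient $|\lambda|$ under $k^\alpha|\lambda|\le\frac12 c_0$, yields $\sup_{nk\le T}y_n\le C(\alpha,T)y_0=C(\alpha,T)C''k^\alpha$, and therefore $\sup_{nk\le T}|e_n|\le C_1(\alpha,T)k^\alpha$.

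I expect the consistency estimate of $R_n$ to be the only delicate point: the solution is merely H\"older-$\alpha$ at the origin, so the naive bound $|X(t_m)-X(s)|\lesssim k\sup|X'|$ fails on the first interval, and the estimate must be split into the endpoint interval (contributing the bare $k^\alpha$) and the interior intervals (where the weighted sum $\sum_{m\ge2} k^\alpha a_{n-m}(m-1)^{\alpha-1}$ must be controlled uniformly in $n$). Everything else — the error identity, the telescoping of the weights, and the Gronwall induction — is routine once part (2) is available.
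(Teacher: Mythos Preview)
Your proposal for part (3) is correct and follows the same route as the paper: write the error in discrete Volterra form, bound the consistency residual $R_n$ by $Ck^\alpha$, and close with a discrete Gronwall comparison against the homogeneous scheme using the stability estimate from part (2). The only cosmetic difference is that the paper bounds $R_n$ more directly from the global $\alpha$-H\"older continuity of $X(t)=x_0E_\alpha(\lambda t^\alpha)$ (giving $|X(t_m)-X(s)|\le Ck^\alpha$ on every subinterval without singling out $m=1$), whereas your derivative estimate $|X'(t)|\lesssim t^{\alpha-1}$ is extra work for the same conclusion once you discard the gain via $(m-1)^{\alpha-1}\le 1$; likewise your explicit induction is exactly the content of Theorem~\ref{thm:implicitcomp}(4), which the paper invokes by name.
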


\begin{proof}

(1). Consider $\lambda>0$. The induction formula from the differential form  reads
\[
(c_0-k^{\alpha}\lambda)X_n=\sum_{i=1}^{n-1}c_i X_{n-i}+c_n^n X_0.
\]
If $n=1$, $c_1^1=c_0$. Then, we clearly have $X_1=\frac{c_0}{c_0-k^{\alpha}\lambda}X_0>X_0$.
Suppose we have proved $X_m\ge X_{m-1}$ for $m\le n$ with $n\ge 1$. For $n+1$,
\[
(c_0-k^{\alpha}\lambda)X_{n+1}= \sum_{i=1}^{n-1}c_i X_{n+1-i}+(c_n X_1+c_{n+1}^{n+1}X_0)
\ge \sum_{i=1}^{n-1}c_i X_{n-i}+(c_n X_0+c_{n+1}^{n+1}X_0).
\]
Since $c_n+c_{n+1}^{n+1}=c_n^n$, the claim then follows.

Now, consider the equivalent integral form (second in \eqref{eq:simplefodescheme}).
\[
X_n=X_0+\frac{\lambda}{\Gamma(\alpha)}\sum_{j=1}^{n}\int_{t_{j-1}}^{t_j}(t_n-s)^{\alpha-1}X_{j}\,ds.
\]
The accurate solution satisfies
\[
X(t_n)=X_0+\frac{\lambda}{\Gamma(\alpha)}\sum_{j=1}^{n}\int_{t_{j-1}}^{t_j}(t_n-s)^{\alpha-1}X(s)\,ds
\le X_0+\frac{\lambda}{\Gamma(\alpha)}\sum_{j=1}^{n}\int_{t_{j-1}}^{t_j}(t_n-s)^{\alpha-1}X(t_j)\,ds.
\]
By the third claim in Theorem \ref{thm:implicitcomp},  $X(t_n)\le X_n$.

Now, we consider $\lambda<0$. Recall that $F_0=0$ and
\[
X_n-X_0=k^{\alpha}\lambda a*(X-X_0\delta_{n0}).
\]
The generating function of $X^n$ is thus given by
\[
F_X(z)=X_0\frac{(1-z)^{-1}-k^{\alpha}\lambda F_a(z)}{1-k^{\alpha}\lambda F_a(z)}.
\]
As $z\to 1^-$, $F_a(z)\to\infty$, $(1-z)F_a(z)\to 0$, and hence
\[
\lim_{n\to\infty}X_n=\lim_{z\to 1^-}(1-z)F_X(z)=0.
\]

(2). We only need to consider $m_1=1$ (or $k_1=2k_2$).  By (1), the piecewise constant functions $\bar{X}_i(t)$'s are nondecreasing. Suppose that for $n\ge 1$,  one has
$\bar{X}_1(t)\ge \bar{X}_2(t)$, $t\in [0, (n-1)k_1]$. Then, for $t\in ((n-1)k_1, nk_1]$, one only needs $\bar{X}_1(nk_1)\ge \bar{X}_2(nk_1)=\bar{X}_2(2nk_2)$ since $\bar{X}_2$ is nondecreasing. By the integral formulation (second in \eqref{eq:simplefodescheme}), 
\begin{align*}
\bar{X}_1(nk_1)& =X_0+\frac{\lambda}{\Gamma(\alpha)}\int_0^{t_{n-1}^{(1)}}(nk_1-s)^{\alpha-1}\bar{X}_1(s)\,ds 
+\frac{\lambda}{\Gamma(\alpha)}\int_{t_{n-1}^{(1)}}^{t_n^{(1)}}(nk_1-s)^{\alpha-1}\bar{X}_1(nk_1)\,ds\\
&\ge X_0+\frac{\lambda}{\Gamma(\alpha)}\int_0^{(n-1)k_1}(nk_1-s)^{\alpha-1}\bar{X}_2(s)\,ds+\frac{\lambda k_1^{\alpha}}{\Gamma(1+\alpha)}\bar{X}_1(nk_1)
\end{align*}

On the other hand,
\begin{align*}
\bar{X}_2(nk_1)=X_0+\frac{\lambda}{\Gamma(\alpha)}\int_0^{t_{n-1}^{(1)}}(nk_1-s)^{\alpha-1}\bar{X}_2(s)\,ds+\frac{\lambda}{\Gamma(\alpha)}\int_{t_{n-1}^{(1)}}^{t_n^{(1)}}(nk_1-s)^{\alpha-1}\bar{X}_2(s)\,ds.
\end{align*}
The last term is simply controlled by $\frac{\lambda k_1^{\alpha}}{\Gamma(1+\alpha)}\bar{X}_2(2nk_2)$ due to monotonicity of $\bar{X}_2$.
Since $c_0=\Gamma(1+\alpha)$ and $\bar{X}_2(2nk_2)=\bar{X}_2(nk_1)$, we then find
\[
\bar{X}_1(nk_1)\ge \frac{1}{1-k_1^{\alpha}\lambda/c_0}\left(X^0+\frac{\lambda}{\Gamma(\alpha)}\int_0^{(n-1)k_1}(nk_1-s)^{\alpha-1}\bar{X}_2(s)\,ds \right)\ge \bar{X}_2(2nk_2).
\]

We now prove the stability. For any step size $k$, we choose $k_0=2^m k$ such that $k_0^{\alpha}\lambda 
\in (\frac{c_0}{2^{1+\alpha}}, \frac{c_0}{2}]$.
Then, with time step $k_0$, there are 
\[
N_0=\frac{T}{k_0} \le T (2^{1+\alpha}\lambda/c_0 )^{1/\alpha}
\]
steps. Then consider the induction using the differential form:
\[
(c_0-k_0^{\alpha}\lambda)X_n \le \sum_{i=1}^{n-1}c_i X_{n-m}+c_n^n X_0
\le c_0 X_{n-1}.
\]
Hence, $X_n\le 2X_{n-1}\le 2^{N_0}X_0$.
The claim then follows.

(3). By the explicit formula of the solution for \eqref{eq:simpleode}, we know $X$ is $\alpha$-H\"older continuous and for $t>0$, it is smooth. Inserting $X(\cdot)$ into the integral form, we have
\begin{gather*}
X(t_n)=X_0+\frac{\lambda}{\Gamma(\alpha)}\sum_{m=1}^n \int_{t_{m-1}}^{t_m}(t_n-s)^{\alpha-1}X(s)\,ds
=X_0+\lambda k^{\alpha}\sum_{m=1}^n a_{n-m}X(t_m)+R_n,
\end{gather*}
where
\[
R_n=\frac{\lambda}{\Gamma(\alpha)}\sum_{m=1}^n \int_{t_{m-1}}^{t_m}(t_n-s)^{\alpha-1}(X(s)-X(t_m))\,ds,
\]
and thus
\[
|R_n|\le C(T)k^{\alpha}\frac{\lambda}{\Gamma(\alpha)}\sum_{m=1}^n \int_{t_{m-1}}^{t_m}(t_n-s)^{\alpha-1}\,ds
=C_1(T,\alpha)k^{\alpha}.
\]
Hence, the error $E^n:=|X_n-X(t_n)|$ satisfies
\[
E^n\le k^{\alpha}|\lambda|\sum_{m=1}^n a_{n-m}E_m+Ck^{\alpha}.
\] 
Using the comparison principle for integral formulation in Theorem \ref{thm:implicitcomp} and the stability result \eqref{eq:stabilityineqa}, we have for $k$ sufficiently small that $E^n\le C(\alpha, T) k^{\alpha}$.
\end{proof}

By Theorem \ref{thm:schemeforlinear}, the following claims hold when we compare the numerical solution with the exact solutions of some FODEs.
\begin{corollary}
Suppose $f(\cdot)\in C^2[A,\infty)$ is nondecreasing and globally Lipschitz for some $A\in\mathbb{R}$. Let $u(\cdot)$ be the solution to the FODE $D_c^{\alpha}u=f(u)$ with $u(0)=U>A$ and $f(U)>0$.  Let $\{u_n\}$ be the numerical solution of the implicit scheme $(\mathcal{D}^{\alpha}u)_n=f(u_n),~~u_0=u(0)$.
Then, for $k$ sufficiently small, $u(t_n)\le u_n \le u_{n+1}$. Moreover, for any $T$ such that $u$ exists on $[0, T]$, we have for some $C(T)>0$ that
\begin{gather}
\sup_{n: nk\le T}|u(t_n)-u_n|\le C(T)k^{\alpha}.
\end{gather}
\end{corollary}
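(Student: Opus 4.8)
The plan is to transfer the linear results of Theorem~\ref{thm:schemeforlinear}, via the comparison principles of Theorem~\ref{thm:implicitcomp}, to the nonlinear equation, using the monotonicity and Lipschitz hypotheses on $f$ to close the argument. I would first record two facts about the exact solution $u$. Since $f$ is globally Lipschitz, $u$ exists on $[0,T]$ and, inserting it into the Volterra form \eqref{eq:Voltter}, i.e. $u(t)=U+\frac{1}{\Gamma(\alpha)}\int_0^t(t-s)^{\alpha-1}f(u(s))\,ds$, boundedness of $f(u)$ on $[0,T]$ shows that $u$ is $\alpha$-H\"older continuous. More importantly, I claim $u$ is nondecreasing with $u\ge U$. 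I would prove this through the Picard iteration $u^{(0)}\equiv U$ and $u^{(j+1)}(t)=U+\frac{1}{\Gamma(\alpha)}\int_0^t(t-s)^{\alpha-1}f(u^{(j)}(s))\,ds$: writing the fractional integral as $\frac{t^{\alpha}}{\Gamma(\alpha)}\int_0^1(1-\tau)^{\alpha-1}f(u^{(j)}(t\tau))\,d\tau$ makes it evident that it maps nonnegative nondecreasing functions to nonnegative nondecreasing functions. Since $f(U)>0$ and $f$ is nondecreasing, an induction shows every iterate is nondecreasing and $\ge U$ (so that $f(u^{(j)})\ge f(U)>0$), and the uniform Picard limit $u$ inherits these properties.

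For the discrete statement $u(t_n)\le u_n\le u_{n+1}$ I treat the two inequalities separately. Once $c_0>k^{\alpha}L$ the map $x\mapsto c_0x-k^{\alpha}f(x)$ is strictly increasing, so each implicit step has a unique solution; at $n=1$ this map takes the value $c_0U-k^{\alpha}f(U)<c_0U$ at $x=U$, whence its root satisfies $u_1>U=u_0$. For the monotonicity $u_n\le u_{n+1}$ I run the induction of Theorem~\ref{thm:schemeforlinear}(1) on the differential form \eqref{eq:capnewdis2}: using $u_{n+1-i}\ge u_{n-i}$, $u_1\ge u_0$, and the identity $c_n+c_{n+1}^{n+1}=c_n^n$ from Proposition~\ref{pro:coefficients}, one arrives at $c_0(u_{n+1}-u_n)\ge k^{\alpha}(f(u_{n+1})-f(u_n))$, and then $c_0>k^{\alpha}L$ together with the Lipschitz bound forces $u_{n+1}\ge u_n$ exactly as in Theorem~\ref{thm:implicitcomp}(3); this also yields $u_n\ge U$ for all $n$. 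For $u(t_n)\le u_n$, I use that $u$ and $f$ are nondecreasing, so $f(u(s))\le f(u(t_m))$ on $(t_{m-1},t_m]$ and hence $u(t_n)\le u_0+k^{\alpha}\sum_{m=1}^n a_{n-m}f(u(t_m))$; thus $\{u(t_n)\}$ is a subsolution of the integral scheme with the same initial data, and Theorem~\ref{thm:implicitcomp}(4) (applicable since $f$ is nondecreasing and Lipschitz and $k^{\alpha}a_0L=k^{\alpha}L/c_0<1$) gives $u(t_n)\le u_n$.

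For the rate I insert $u$ into the scheme, obtaining $u(t_n)=u_0+k^{\alpha}\sum_{m=1}^n a_{n-m}f(u(t_m))+R_n$ with $R_n=\frac{1}{\Gamma(\alpha)}\sum_{m=1}^n\int_{t_{m-1}}^{t_m}(t_n-s)^{\alpha-1}\big(f(u(s))-f(u(t_m))\big)\,ds$. The $\alpha$-H\"older continuity of $u$ and the Lipschitz continuity of $f$ give $|f(u(s))-f(u(t_m))|\le Ck^{\alpha}$ on each subinterval, whence $|R_n|\le C_1(\alpha,T)k^{\alpha}$, just as in Theorem~\ref{thm:schemeforlinear}(3). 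Subtracting the scheme, the error $E^n:=|u_n-u(t_n)|$ satisfies $E^n\le k^{\alpha}L\sum_{m=1}^n a_{n-m}E^m+C_1(\alpha,T)k^{\alpha}$. I then dominate $E^n$ by the numerical solution $G_n$ of the linear problem $(\mathcal{D}^{\alpha}G)_n=LG_n$ with $G_0=C_1(\alpha,T)k^{\alpha}$, which solves $G_n=C_1(\alpha,T)k^{\alpha}+k^{\alpha}L\sum_{m=1}^n a_{n-m}G_m$; a short induction using $k^{\alpha}a_0L<1$ shows $E^n\le G_n$, and then the stability bound \eqref{eq:stabilityineqa} of Theorem~\ref{thm:schemeforlinear}(2) gives $\sup_{nk\le T}E^n\le\sup_{nk\le T}G_n\le C(\alpha,T)G_0=C(T)k^{\alpha}$, as required. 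All the smallness conditions on $k$ ($c_0>k^{\alpha}L$, $k^{\alpha}L/c_0<1$, and $k^{\alpha}L\le\frac12 c_0$ needed by \eqref{eq:stabilityineqa}) hold simultaneously for $k$ small.

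I expect the genuine obstacle to be the monotonicity of the exact solution $u$: it is precisely what makes the quadrature remainder $R_n$ have a definite sign, so that $\{u(t_n)\}$ is a true subsolution and $u(t_n)\le u_n$ can be read off from Theorem~\ref{thm:implicitcomp}(4). Unlike the integer-order setting, $D_c^{\alpha}u=f(u)\ge 0$ does not by itself force $u$ to be nondecreasing, so the scaling/Picard argument (or an equivalent continuous comparison principle) is where care is needed. Everything else is a faithful nonlinear transcription of the proofs of Theorems~\ref{thm:implicitcomp} and \ref{thm:schemeforlinear}, with $\lambda X$ replaced by $f$ and controlled through the Lipschitz constant.
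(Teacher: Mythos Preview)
Your proof is correct and follows essentially the same strategy as the paper: establish monotonicity of both $u$ and $\{u_n\}$, deduce $u(t_n)\le u_n$ from the sign of the quadrature remainder via Theorem~\ref{thm:implicitcomp}(4), and obtain the rate by bounding $R_n$ and invoking the linear stability \eqref{eq:stabilityineqa}. The only differences are cosmetic: the paper cites \cite{fllx18} for the monotonicity of $u$ where you give a self-contained Picard argument, and the paper proves $u_n\le u_{n+1}$ from the integral form (an index shift together with $u_{n}-k^{\alpha}a_0 f(u_n)$ increasing) whereas you run the induction on the differential form \eqref{eq:capnewdis2} in the style of Theorem~\ref{thm:schemeforlinear}(1).
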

\begin{proof}
The solution to the FODE satisfies the following (\cite{fllx18}):
\begin{gather}\label{eq:utnineq}
u(t_n) =u_0+\frac{1}{\Gamma(\alpha)}\sum_{j=1}^{n}\int_{t_{j-1}}^{t_j}(t_n-s)^{\alpha-1}f(u(t_j))\,ds+R_n,
\end{gather}
where
\[
R_n=\frac{1}{\Gamma(\alpha)}\sum_{j=1}^{n}\int_{t_{j-1}}^{t_j}(t_n-s)^{\alpha-1}\Big(f(u(s))-f(u(t_j))\Big)\,ds.
\]
By the theory in \cite{fllx18}, $u(\cdot)$ is nondecreasing and thus $R_n\le 0$. Consequently, applying Theorem \ref{thm:implicitcomp} (4), we have $u(t_n)\le u_n$. 

To prove that $\{u_n\}$ is nondecreasing, we use induction. It is clear that $u_0\le u_1$. Now, assume we have proved $u_0\le u_1\le \cdots \le u_{n}$ for $n\ge 1$. We now prove $u_n\le u_{n+1}$. Using the equivalent integral form, we find
\begin{gather}\label{eq:unineq}
\begin{split}
u_{n}-k^{\alpha}a_0f(u_n) &=u_0+\frac{1}{\Gamma(\alpha)}\sum_{j=1}^{n-1}\int_{t_{j-1}}^{t_j}(t_{n+1}-s)^{\alpha-1}f(u_j)\,ds \\
&\le u_0+\frac{1}{\Gamma(\alpha)}\sum_{j=2}^{n}\int_{t_{j-1}}^{t_j}(t_{n+1}-s)^{\alpha-1}f(u_j)\,ds \\
& \le u_{n+1}-k^{\alpha}a_0 f(u_{n+1}).
\end{split}
\end{gather}
This implies that $u_n\le u_{n+1}$ when $k$ is sufficiently small.

Finally, by \eqref{eq:utnineq} and \eqref{eq:unineq}, we have
\[
|u(t_n)-u_n|\le \frac{1}{\Gamma(\alpha)}\sum_{j=1}^{n}\int_{t_{j-1}}^{t_j}(t_n-s)^{\alpha-1}|f(u(t_j))-f(u_j)|\,ds
+|R^n|
\]
It is well-known that $u$ is $\alpha$-H\"older continuous on $[0, T]$ (one can for example combine \cite[Lemma 3.1]{fllx18} and \cite[Theorem 3.1]{skm93}). Consequently, $\sup_{n: nk\le T}|R_n|\le C(T)k^{\alpha}$. Hence,
\[
|u(t_n)-u_n|\le \frac{L}{\Gamma(\alpha)}\sum_{j=1}^{n}|u(t_j)-u_j|\int_{t_{j-1}}^{t_j}(t_n-s)^{\alpha-1}\,ds
+C(T)k^{\alpha}.
\]
Applying Theorem \ref{thm:schemeforlinear} (2), we thus find $\sup_{n: nk\le T}|u(t_n)-u_n|\le C(T)k^{\alpha}$.
\end{proof}

\section{Limiting behavior of fractional SDE}\label{sec:fsde}

In this section, we use the implicit scheme corresponding to the discretization \eqref{eq:capnewdis2} to study the fractional SDE as advertised in the introduction. 
In particular, we first of all provide some details for the derivation of the FSDE, and then prove that when the potential is strongly convex, there is a unique limiting measure for the FSDE.

\subsection{A formal derivation of the fractional SDE}\label{sec:formalfsde}

We first of all derive the auto-correlation function for the fractional noise.  Recall that the fractional Brownian motion has the following: 
\begin{gather}\label{fb:cov}
\mathbb{E}(B_t^HB_s^H)=R_H(s, t) := \frac{1}{2}\left(s^{2H}+t^{2H}-|t-s|^{2H}\right).
\end{gather}
Fix $\tau\neq 0$. Formally, for $t>0$ with $t+\tau>0$, it holds that 
\begin{multline}
\mathbb{E}(\dot{B}_H(t)\dot{B}_H(\tau+t))=\lim_{h\to 0, h_1\to 0}\mathbb{E}\left(\frac{B_H(t+h_1)-B_H(t)}{h_1}\frac{B_H(t+\tau+h)-B_h(t+\tau)}{h}\right)\\
  =\lim_{h\to 0, h_1\to 0}
  \frac{1}{2hh_1}\bigl(|\tau+h|^{2H}-|\tau+h-h_1|^{2H}-|\tau|^{2H}+|\tau-h_1|^{2H}\bigr)=H(2H-1) |\tau|^{2H-2}.
\end{multline}
Assume there is no extra singularity for $\tau=0$, we check formally for $s<t$:
\begin{gather}\label{fb:cov2}
\begin{split}
\mathbb{E}(B_t^HB_s^H) &=\int_0^t\int_0^s \mathbb{E}\left(\dot{B}_H(z)\dot{B}_H(w) \right) \,dz dw \\
&=\int_0^s\int_0^tH(2H-1)|z-w|^{2H-2}dzdw =\frac{1}{2}(s^{2H}+t^{2H}-(t-s)^{2H}).
\end{split}
\end{gather}
Now that \eqref{fb:cov2} agrees with \eqref{fb:cov}. Hence, the assumption for no extra singularity at $\tau=0$ is reasonable.
According to FDT \eqref{eq:fdt}, the GLE \eqref{eq:gle}
 is then reduced to the following dimensionless equation
\[
\epsilon\dot{v}=-\nabla V-\frac{1}{\Gamma(2H-1)}\int_0^t(t-s)^{2H-2}v(s)\,ds
+\frac{\sqrt{2}}{\sqrt{\Gamma(2H+1)}}\dot{B}_H.
\]
Here, $\epsilon=\frac{mT^{2H}}{\gamma_0}$ with $T$ being the scale for time, $\gamma_0$ being typical scale for the friction (see \cite{lll17} for detials). In the overdamped regime, $\epsilon\ll 1$, the  GLE with fractional Gaussian noise formally corresponds to the fractional SDE
\begin{gather}
D_c^{\alpha}X=-\nabla V(X)+\sigma \dot{B}_H,
\end{gather}
with $\alpha=2-2H, ~\sigma=\frac{\sqrt{2}}{\sqrt{\Gamma(2H+1)}}$.
This overdamped generalized Langevin equation (overdamped GLE) is rigorously defined through the following integral formulation
\begin{gather}\label{eq:fsde2}
X(t)=X_0+\frac{1}{\Gamma(\alpha)}\int_0^t (t-s)^{\alpha-1}b(X(s))\,ds+G(t)
\end{gather}
where
\begin{gather}
G(t) :=\frac{\sigma}{\Gamma(\alpha)}\int_0^t (t-s)^{\alpha-1} dB_H(s).
\end{gather}
We can consider generally $\alpha\in (0, 1)$ and $\sigma>0$ for fractional SDEs. Of course, only the one with $\alpha=2-2H$ has physical significance, which is the overdamped GLE. It has  been shown in \cite{lll17} that when $\alpha=2-2H$ and $\sigma=\frac{\sqrt{2}}{\sqrt{\Gamma(2H+1)}}$, $G(t)$ is another fractional Brownian motion with Hurst parameter $1-H$ up to some multiplicative constant:  $G(t)\sim \beta_H B_{1-H}$ with $\beta_H=\frac{\sqrt{2}}{\sqrt{\Gamma(3-2H)}}$. 

When the force $-\nabla V(x)$ is linear, the distribution of $X$ converges algebraically to the Gibbs measure (\cite{lll17}). For general cases, whether it converges to the Gibbs measure is unknown.   Recently, in the case of overdamped GLE, some numerical experiments indicate that the law of $X$ still converges algebraically to the corresponding Gibbs measure for general potential (\cite{fang2018}).

\subsection{Convergence to equilibrium for strongly convex potentials}
In this subsection, we will try to use our discretization to study the limit behaviors of the FSDE for the strongly convex potential $V$. In particular, we show that there is a unique limiting measure as $t\to\infty$. Letting
\begin{gather}
b(x)=-\nabla V(x),
\end{gather}
we will assume the following.
\begin{assumption}
There exists some $\mu>0$ such that
\begin{gather}
(x-y)\cdot (b(x)-b(y))\le -\mu |x-y|^2,~\forall x, y\in\mathbb{R}^d.
\end{gather}
Moreover, $b(\cdot)$ is Lipschitz continuous so that for some $L>0$,
\begin{gather}\label{eq:lip}
|b(x)-b(y)|\le L|x-y|.
\end{gather}
\end{assumption}

The Lipschitz condition of $b$ may be relaxed by proving that the probability density of the process decays fast at infinity. Since this is not our focus, we assume the Lipschitz condition for simplicity.
 As proved in  \cite{lll17} and \cite{fang2018}, with assumption \eqref{eq:lip}, the fractional SDE \eqref{eq:fsde2} has a unique strong solution and for any $T>0$ there exists $C(T)>0$ such that the following hold.
\begin{gather}\label{eq:momentholder}
\sup_{t\ge T} \mathbb{E}|X(t)|^2\le C(T),\quad \sup_{t\ge T}\sqrt{\mathbb{E}|X(t+\delta)-X(t)|^2}\le C(T)\delta^{H+\alpha-1}.
\end{gather}

Now, we consider the limiting behavior of the law for $X(t)$. 
Given two different initial data $X^{(i)}(0)$, $i=1,2$, we consider the strong solutions of \eqref{eq:fsde2}. 
We will use the synchronization coupling to compare the distributions of the two processes.
Taking the difference between two solutions, we have
\[
X^{(1)}(t)-X^{(2)}(t)=X^{(1)}(0)-X^{(2)}(0)+\frac{1}{\Gamma(\alpha)}\int_0^t (t-s)^{\alpha-1}(b(X^{(1)}(s))-b(X^{(2)}(s)))\,ds.
\]
To get the idea of a proof, we apply the theory in \cite{liliu2018} so that the Caputo derivatives can be defined pathwise for $X^{(1)}-X^{(2)}$ (see also Section \ref{sec:setup} for the brief introduction). In the distributional sense, it holds that 
\begin{gather}
D_c^{\alpha}(X^{(1)}-X^{(2)})(t)=b(X^{(1)}(t))-b(X^{(2)}(t)),~\text{almost surely}.
\end{gather}
If $X^{(1)}-X^{(2)}$ is regular enough, applying \cite[Proposition 3.11]{liliu2018}, we have
\begin{gather}\label{eq:continuousineq}
\frac{1}{2}D_c^{\alpha}|X^{(1)}-X^{(2)}|^2(t)\le (X^{(1)}-X^{(2)})\cdot(b(X^{(1)})-b(X^{(2)}))\le -\mu |X^{(1)}-X^{(2)}|^2.
\end{gather}
If we define $u(t):=\mathbb{E}\Big(|X^{(1)}(t)-X^{(2)}(t)|^2\Big)$, it then holds that
\[
D_c^{\alpha}u(t)\le -2\mu u(t).
\]
Applying the comparison principle for $D_c^{\alpha}u=f(t, u)$ with nonincreasing $f(t,\cdot)$ (see, for example, \cite[Theorem 2.1]{fllx18note}) yields
\begin{gather}\label{eq:desiredcontrol}
u(t)\le u(0)E_{\alpha}(-2\mu t^{\alpha}).
\end{gather}
If this is true, we then are able to compare the laws of the two strong solutions of \eqref{eq:fsde2}  under Wasserstein-$2$ distance. Recall that the Wasserstein-$2$ distance is given by \cite{santambrogio2015}
\begin{gather}\label{eq:W2}
W_2(\mu, \nu)=\left(\inf_{\gamma \in \Pi(\mu,\nu)}\int_{\mathbb{R}^d\times\mathbb{R}^d}|x-y|^2 d\gamma\right)^{1/2},
\end{gather}
where $\Pi(\mu,\nu)$ is the set of joint distributions whose marginal distributions are $\mu$ and $\nu$ respectively. Equation \eqref{eq:desiredcontrol} will imply the convergence of the law of the process to the unique limiting measure.

The issue in the above argument is that \eqref{eq:continuousineq} is not justified rigorously. In the following, we shall utilize the implicit scheme based on discretization \eqref{eq:capnewdis2} to prove the convergence of the law.  In fact, we have the following claims.
\begin{theorem}\label{thm:limitsde}
Suppose assumption \eqref{eq:lip} holds and $X^{(i)}(t)$ are the two strong solution to the FSDE \eqref{eq:fsde2} with initial data $X^{i}(0)=X_0^{(i)}\sim \mu_0^{(i)}$ ($i=1,2$), where $\mu_0^{(i)}$ are some given probability measures.
Then, the laws of $X^{(i)}(t)$ satisfy in Wasserstein-$2$ distance that
\begin{gather}
W_2(\mu^{(1)}(t), \mu^{(2)}(t))\le W_2(\mu_0^{(1)}, \mu_0^{(2)})\sqrt{E_{\alpha}(-2\mu t^{\alpha})}.
\end{gather}
Consequently, the FSDE model has a unique limiting measure $\pi$.
\end{theorem}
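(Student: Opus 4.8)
The plan is to make rigorous the formal computation \eqref{eq:continuousineq}--\eqref{eq:desiredcontrol} by carrying it out at the level of the implicit scheme \eqref{eq:capnewdis2}, where the chain-rule inequality is available for free through the convex-functional estimate of Theorem \ref{thm:implicitcomp}(1), and only afterwards passing to the limit $k\to 0$. Throughout I would use a synchronization coupling: both numerical approximations are driven by the \emph{same} realization of $B_H$, so that the additive forcing is identical for the two sequences and cancels in their difference. Concretely, let $X_n^{(i)}$ denote the implicit numerical solutions of \eqref{eq:fsde2},
\[
X_n^{(i)} = X_0^{(i)} + k^{\alpha}\sum_{m=1}^{n} a_{n-m}\, b(X_m^{(i)}) + G_n,
\]
where $G_n$ is the common (synchronized) discretization of $G(t_n)$, and set $\xi_n = X_n^{(1)}-X_n^{(2)}$. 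Since $\mathcal{D}^{\alpha}$ is linear and the noise cancels, $(\mathcal{D}^{\alpha}\xi)_n = b(X_n^{(1)})-b(X_n^{(2)})$ with $\xi_0 = X_0^{(1)}-X_0^{(2)}$.

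Applying Theorem \ref{thm:implicitcomp}(1) to the sequence $\xi$ with the convex function $E(z)=|z|^2$, so that $\nabla E(z)=2z$, gives
\[
\frac12 (\mathcal{D}^{\alpha}|\xi|^2)_n \le \xi_n\cdot(\mathcal{D}^{\alpha}\xi)_n = (X_n^{(1)}-X_n^{(2)})\cdot\big(b(X_n^{(1)})-b(X_n^{(2)})\big)\le -\mu|\xi_n|^2,
\]
the last inequality being the strong monotonicity in the standing assumption. Taking expectations and writing $u_n=\bbE|\xi_n|^2$, the deterministic coefficients of $\mathcal{D}^{\alpha}$ commute with $\bbE$, so $(\mathcal{D}^{\alpha}u)_n\le -2\mu u_n$ with $u_0=\bbE|X_0^{(1)}-X_0^{(2)}|^2$. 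This is the rigorous discrete replacement for the unjustified step \eqref{eq:continuousineq}: no regularity of $\xi$ is needed because the convexity estimate holds unconditionally.

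The function $f(u)=-2\mu u$ is nonincreasing, so letting $v_n$ solve the linear implicit scheme $(\mathcal{D}^{\alpha}v)_n=-2\mu v_n$ with $v_0=u_0$, Theorem \ref{thm:implicitcomp}(2) yields $u_n\le v_n$, while Theorem \ref{thm:schemeforlinear}(3) gives $v_n\to u_0 E_{\alpha}(-2\mu t^{\alpha})$ as $k\to0$, $nk\to t$, with an $O(k^{\alpha})$ error. Provided the numerical solution converges to the strong solution in $L^2(\Omega)$, i.e. $u_n\to \bbE|X^{(1)}(t)-X^{(2)}(t)|^2$, letting $k\to0$ produces
\[
\bbE|X^{(1)}(t)-X^{(2)}(t)|^2\le \bbE|X_0^{(1)}-X_0^{(2)}|^2\,E_{\alpha}(-2\mu t^{\alpha}).
\]
Choosing $(X_0^{(1)},X_0^{(2)})$ to be an optimal $W_2$-coupling of $\mu_0^{(1)},\mu_0^{(2)}$ and observing that $(X^{(1)}(t),X^{(2)}(t))$ is a coupling of $\mu^{(1)}(t),\mu^{(2)}(t)$ gives the stated Wasserstein bound after taking square roots.

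For the final assertion, uniqueness (independence from the initial law) is immediate: since $E_{\alpha}(-2\mu t^{\alpha})\to0$, decaying algebraically like $t^{-\alpha}$, the contraction forces $W_2(\mu^{(1)}(t),\mu^{(2)}(t))\to0$. Existence of $\pi$ I would obtain by showing $\{\mu(t)\}$ is Cauchy in $W_2$, using that the uniform second-moment and H\"older bounds \eqref{eq:momentholder} give tightness and $W_2$-relative compactness, with the contraction identifying all limit points. I expect the main obstacle to be precisely the limit passage in the stochastic setting, namely establishing the $L^2(\Omega)$ convergence of the implicit scheme with additive fractional forcing to the strong solution of \eqref{eq:fsde2}, uniformly enough to transfer the discrete Mittag-Leffler bound; a secondary delicate point is upgrading the subsequential limit to a genuine limiting measure, since the memory renders the dynamics non-Markovian and there is no direct semigroup structure to exploit.
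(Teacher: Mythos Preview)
Your proposal is correct and follows essentially the same route as the paper: the discrete convex-functional inequality from Theorem \ref{thm:implicitcomp}(1), the comparison with the linear scheme via Theorem \ref{thm:implicitcomp}(2), and the passage to the limit using Theorem \ref{thm:schemeforlinear}(3). The $L^2(\Omega)$ convergence of the implicit scheme that you flag as the main obstacle is exactly what the paper supplies as Lemma \ref{lmm:convresult}, with rate $k^{\alpha+H-1}$ obtained from the Lipschitz bound \eqref{eq:lip} and the H\"older estimate in \eqref{eq:momentholder}.
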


We will apply the following backward Euler scheme based on \eqref{eq:capnewdis2} to FSDE \eqref{eq:fsde2}.
\begin{gather}\label{eq:fsdescheme}
X_n=X(0)+k^{\alpha}\sum_{m=1}^n a_{n-m} b(X_m)+G(t^n).
\end{gather}
We need some preparation for the complete proof. The first is the following convergence result of the scheme \eqref{eq:fsdescheme}.
\begin{lemma}\label{lmm:convresult}
Suppose assumption \eqref{eq:lip} holds and $X(t)$ is the unique strong solution to \eqref{eq:fsde2}.
Let $X_n$ be the numerical solution to \eqref{eq:fsdescheme}.
Then, for $k$ with $k^{\alpha}L<c_0/2$, 
\begin{gather}
\sup_{n: nk\le T}\sqrt{\mathbb{E}(|X_n-X(nk)|^2)} \le C(\alpha, T) k^{\alpha+H-1}.
\end{gather}
\end{lemma}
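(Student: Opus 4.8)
The goal is to bound $\sqrt{\mathbb{E}|X_n - X(nk)|^2}$ uniformly in $n$ with $nk \le T$. The plan is to subtract the exact integral formulation \eqref{eq:fsde2} from the numerical scheme \eqref{eq:fsdescheme} and exploit the fact that both share the \emph{same} noise realization $G$ via the synchronization coupling, so the stochastic forcing cancels exactly. This leaves a deterministic-looking Volterra difference driven only by the drift mismatch plus a quadrature remainder. First I would write, for $nk \le T$,
\begin{gather*}
X_n - X(t_n) = \frac{1}{\Gamma(\alpha)}\sum_{m=1}^n \int_{t_{m-1}}^{t_m}(t_n-s)^{\alpha-1}\bigl(b(X_m)-b(X(s))\bigr)\,ds,
\end{gather*}
and split the integrand as $b(X_m)-b(X(t_m))$ plus $b(X(t_m))-b(X(s))$. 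Recalling $k^\alpha a_{n-m} = \frac{1}{\Gamma(\alpha)}\int_{t_{m-1}}^{t_m}(t_n-s)^{\alpha-1}\,ds$, the first piece contributes $k^{\alpha}\sum_{m=1}^n a_{n-m}\bigl(b(X_m)-b(X(t_m))\bigr)$, and the second piece forms the quadrature remainder $R_n$.

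Next I would take $L^2$ norms. Setting $E^n := \sqrt{\mathbb{E}|X_n - X(t_n)|^2}$, the Lipschitz bound \eqref{eq:lip} gives $\sqrt{\mathbb{E}|b(X_m)-b(X(t_m))|^2} \le L\,E^m$, and by Minkowski's inequality for the discrete convolution together with the triangle inequality,
\begin{gather*}
E^n \le k^{\alpha}L \sum_{m=1}^n a_{n-m}E^m + \sqrt{\mathbb{E}|R_n|^2}.
\end{gather*}
The remainder term is where the regularity of the exact solution enters: using the Hölder estimate in \eqref{eq:momentholder}, which gives $\sqrt{\mathbb{E}|X(t_m)-X(s)|^2}\le C(T)|t_m - s|^{H+\alpha-1}$ for $s \in (t_{m-1},t_m]$, and again \eqref{eq:lip}, I would bound $\sqrt{\mathbb{E}|R_n|^2}$ by $\frac{C(T)L}{\Gamma(\alpha)}\sum_{m=1}^n \int_{t_{m-1}}^{t_m}(t_n-s)^{\alpha-1}k^{H+\alpha-1}\,ds$. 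The $s$-integral sums to the full fractional integral of the constant $1$ over $[0,t_n]$, namely $\frac{t_n^\alpha}{\Gamma(1+\alpha)}\le \frac{T^\alpha}{\Gamma(1+\alpha)}$, so $\sqrt{\mathbb{E}|R_n|^2}\le C_1(\alpha,T)\,k^{H+\alpha-1}$.

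Finally I would close the estimate with a discrete fractional Grönwall argument. The inequality $E^n \le k^{\alpha}L\sum_{m=1}^n a_{n-m}E^m + C_1 k^{H+\alpha-1}$ is precisely of the form handled by Theorem \ref{thm:implicitcomp} (4) comparison for the integral formulation combined with the stability bound \eqref{eq:stabilityineqa} in Theorem \ref{thm:schemeforlinear} (2): the nonnegative sequence $E^n$ is dominated by the numerical solution of the scalar scheme $y_n = C_1 k^{H+\alpha-1} + \lambda k^{\alpha}\sum_{m=1}^n a_{n-m}y_m$ with $\lambda = L$, whose supremum over $nk\le T$ is controlled by $C(\alpha,T)\cdot C_1 k^{H+\alpha-1}$ once $k^{\alpha}L < c_0/2$. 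This yields $\sup_{nk\le T}E^n \le C(\alpha,T)k^{H+\alpha-1}$ as claimed. The main obstacle is organizing the $L^2$ version of the Grönwall step cleanly: the comparison principles in Theorem \ref{thm:implicitcomp} are stated for real sequences, so I must first take expectations to reduce everything to a scalar inequality in the deterministic quantities $E^n$ before invoking stability, and I must verify that the convolution kernel $a$ being nonnegative (Proposition \ref{pro:coefficients}) is what legitimizes the termwise comparison. A secondary point requiring care is that the Hölder exponent $H+\alpha-1$ must be positive for the remainder bound to be meaningful, which holds precisely in the physically relevant regime $\alpha = 2-2H$ where $H+\alpha-1 = 1-H > 0$.
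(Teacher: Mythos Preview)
Your proof is correct and follows essentially the same route as the paper: subtract the exact integral relation from the scheme so that the noise term $G(t_n)$ cancels, split the drift discrepancy into an error-propagation convolution plus a quadrature remainder bounded via \eqref{eq:momentholder}, and close with the comparison principle Theorem~\ref{thm:implicitcomp}(4) together with the stability estimate \eqref{eq:stabilityineqa}. Your write-up is in fact more explicit than the paper's about the use of Minkowski's inequality and the positivity of $H+\alpha-1$, but the argument is the same.
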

\begin{proof}
The proof is very similar to that for the third claim in Theorem \ref{thm:schemeforlinear}.
In fact, the strong solution of \eqref{eq:fsde2} satisfies
\begin{gather}\label{eq:localtruncation}
X(t_n)=X_0+k^{\alpha}\sum_{m=1}^{n}a_{n-m} b(X(t_m))+R_n,
\end{gather}
where
\[
R_n:=\frac{1}{\Gamma(\alpha)}\sum_{m=1}^n \int_{t_{m-1}}^{t_m}(b(X(s))-b(X(t_m))).
\]
Using \eqref{eq:lip} and \eqref{eq:momentholder}, one finds
\[
(\mathbb{E}|R_n|^2)^{1/2}\le C k^{\alpha+H-1}.
\]
Taking the difference between \eqref{eq:fsdescheme} and \eqref{eq:localtruncation} and defining 
$E_n:=(\mathbb{E}|X^n-X(t_n)|^2)^{1/2}$,  one then has
\begin{gather}
E_n \le k^{\alpha}L\sum_{m=1}^n a_{n-m} E_m +Ck^{\alpha+H-1}.
\end{gather}
Finally, using the comparison principle for integral formulation in Theorem \ref{thm:implicitcomp} and the stability result \eqref{eq:stabilityineqa}, the claim follows.
\end{proof}

Consider two numerical solutions $\{X^{(1)}_n\}$ and $\{X^{(2)}_n\}$ with initial data $X_0^{(i)}$ ($i=1,2$), with the synchronization coupling. The variable $Z_n:=X^{(1)}_n - X^{(2)}_n$ satisfies the following relation:
\begin{gather}
Z_n=Z_0+k^{\alpha}\sum_{m=1}^n a_{n-m} (b(X^{(1)}_m)-b(X^{(2)}_m)).
\end{gather}
Equivalently, one has almost surely that
\[
k^{-\alpha} \left(\sum_{j=1}^{n-1}c_j  (Z_n-Z_{n-j})+c_n^n(Z_n-Z_0) \right)=b(X^{(1)}_n)-
b(X^{(2)}_n).
\]
Applying the first claim in Theorem \ref{thm:implicitcomp} for $E(u)=\frac{1}{2}u^2$, one has almost surely that
\begin{gather}\label{eq:directineqZn}
(\mathcal{D}^{\alpha}|Z|^2)_n
\le - 2 Z_n\cdot (b(X^{(1)}_n)-
b(X^{(2)}_n))\le -2\mu |Z_n|^2.
\end{gather}
The point is that one may pass this inequality somehow to the strong solutions of \eqref{eq:fsde2} by taking $k\to 0$.

\begin{proof}[Proof of Theorem \ref{thm:limitsde}]
Define
\[
u_n :=\mathbb{E}(|Z_n|^2),
\]
and correspondingly
\[
Z(t):=X^{(1)}(t)-X^{(2)}(t),~~u(t):=\mathbb{E}(|Z(t)|^2).
\]

A direct consequence of inequality \eqref{eq:directineqZn} is 
\[
\sup_{n\ge 0}u_n\le u_0.
\]
Applying Theorem \ref{thm:implicitcomp} (2), $u_n \le v_n$, where $v_n$ solves the following induction formula:
\[
(\mathcal{D}^{\alpha}v)_n=-2\mu v_n,~~v_0=u_0=\mathbb{E}(|Z_0|^2).
\]
Let $v(t)$ solve the FODE, $D_c^{\alpha}v=-2\mu v$, ~$v(0)=u_0$. By Theorem \ref{thm:schemeforlinear},
$|v_n-v(t_n)|\le C k^{\alpha}$.

Applying \eqref{eq:momentholder} and Lemma \ref{lmm:convresult}, one has
\[
|u(t_n)-u_n|\le (\sqrt{u}_0+C_1(T))\sqrt{\mathbb{E}(|Z_n-Z(t_n)|^2)} \le C k^{\alpha+H-1}.
\]
Hence, for all $n, nk\le T$, it holds that
\[
u(t_n)\le v(t_n)+C(k^{\alpha+H-1}+k^{\alpha}).
\]
Taking $k\to 0$ then gives
\begin{gather}
u(t)\le v(t)\le u_0 E_{\alpha}(-2\mu t^{\alpha}).
\end{gather}
This inequality clearly implies the claim about the Wasserstein distance using \eqref{eq:W2}.
\end{proof}

As in section \ref{sec:formalfsde}, the overdamped GLE with fractional noise corresponds to 
\begin{gather}
\alpha=2-2H,~\sigma=\frac{\sqrt{2}}{\sqrt{\Gamma(2H+1)}}.
\end{gather}
 We guess that the limiting measure is the Gibbs measure $\pi(x)\propto \exp(-V(x))$.
Rigorously justifying this seems challenging, and we leave it for the future.

\subsection{A numerical simulation}

\begin{figure}
\begin{center}
	\includegraphics[width=0.8\textwidth]{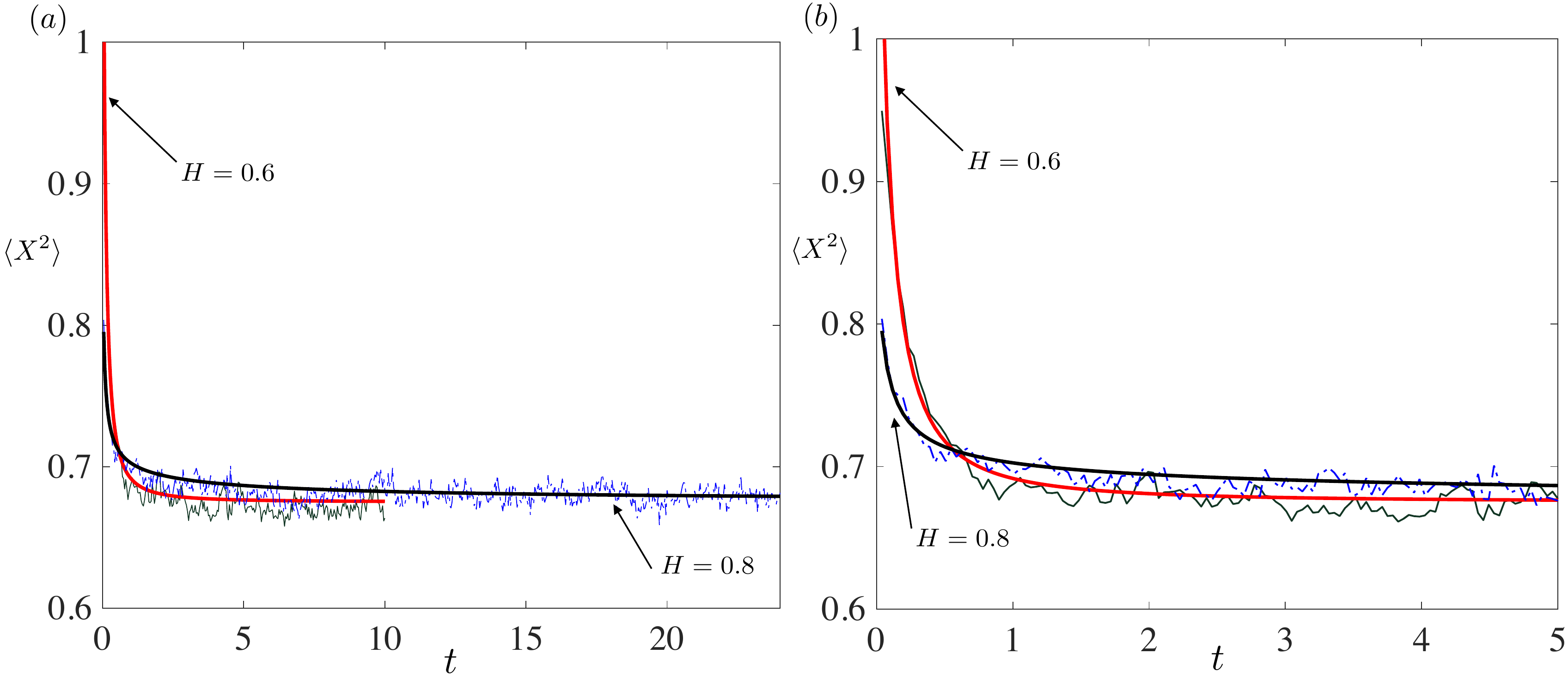}
\end{center}
\caption{Means square distance of the FSDE with potential \eqref{eq:V} for $H=0.6$ and $H=0.8$.}
\label{fig:meansquare}
\end{figure}

In this section, we apply the implicit numerical scheme \eqref{eq:fsdescheme} to a 1D FSDE example with 
\begin{gather}\label{eq:V}
V(x)=\frac{1}{4}x^4.
\end{gather}
We choose $\alpha=2-2H$ so that $G\sim \beta_H B_{1-H}$ as we have mentioned.
This potential is convex but not strongly convex, and the corresponding force $-\nabla V(x)=-x^3$ is nonlinear. Justification of convergence to a limiting measure is by no means easy, not to mention whether the limiting measure is the Gibbs distribution
\[
\pi(x)\propto \exp\left(-\frac{1}{4}x^4\right).
\]

Fig. \ref{fig:meansquare} shows the trend of mean square distance $\mathbb{E}X^2=:\langle X^2\rangle$ with $X(0)=X_0=1$. Fig.  \ref{fig:meansquare} (b) enlarges the portion $t\in [0, 5]$ of Fig.  \ref{fig:meansquare} (a). If the distribution of the FSDE converges to the Gibbs measure, then $\mathbb{E}X^2\to 0.675$.
In the figures, the green curve is the numerical simulation for $H=0.6$ with $k=5/2^7\approx 0.391$ and 
$N_s:=10^4$ samples, while the red solid curve is the fitting curve $0.675+0.02(0.12+t)^{4H-4}$.
The blue curve is the numerical simulation for $H=0.8$ with the same $k$ and number of samples, while the black curve is the fitting curve $0.675+0.015(0.05+t)^{4H-4}+0.04(3+t)^{4H-4}$. We use $4H-4$ power to fit because the variance of $X$ in the linear forcing case has been shown to converge with rate $t^{4H-4}$ in \cite{lll17}. In this sense, the rate in Theorem \ref{thm:limitsde} might not be optimal.

\begin{figure}
\begin{center}
	\includegraphics[width=0.8\textwidth]{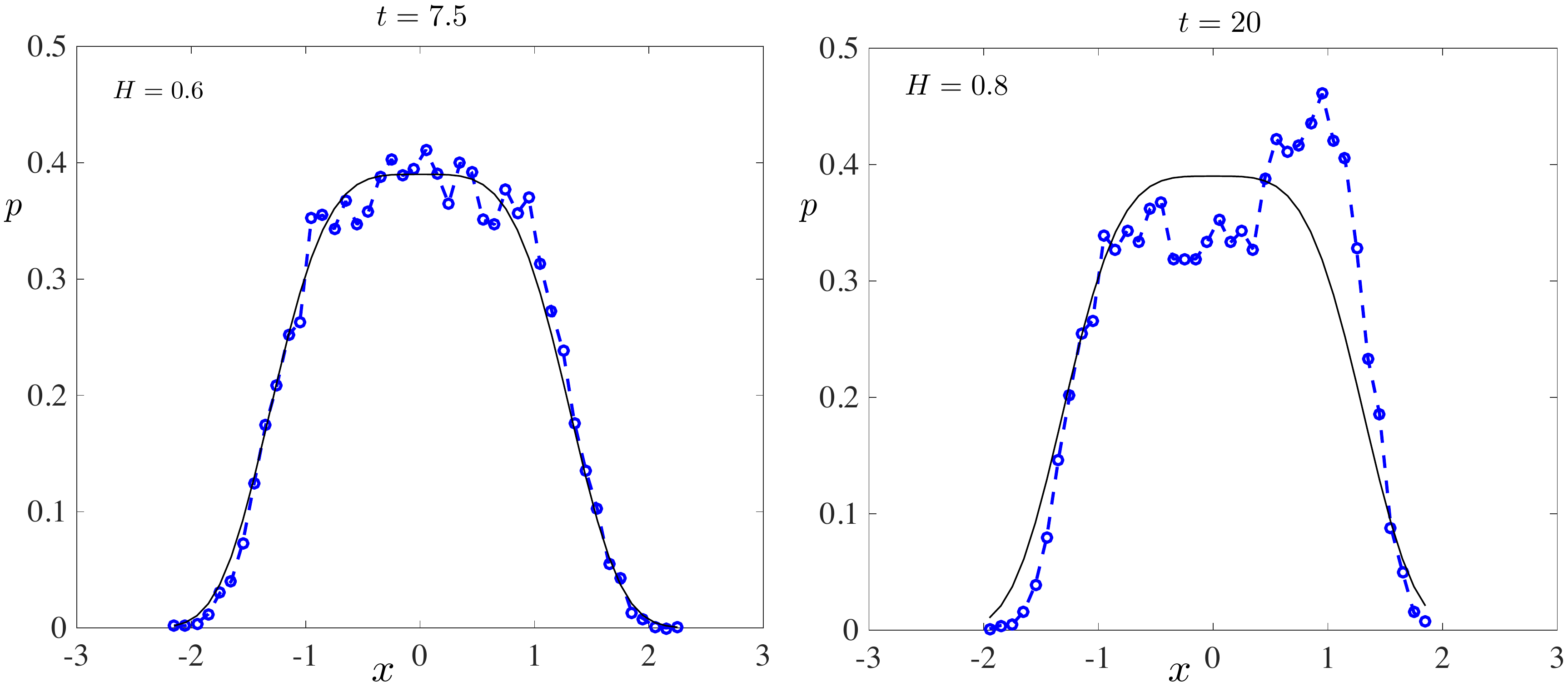}
\end{center}
\caption{Empirical density. When $\alpha=2-2H=0.8$, the convergence is obtained after a reasonable time; when $\alpha=2-2H=0.4$, an intermediate distribution lingers for long time before converging to the final equilibrium.}
\label{fig:density}
\end{figure}

When $H=0.6$ and $\alpha=0.8$, the mean square distance already converges after $t=5$ or so. However, for $H=0.8$ or $\alpha=0.4$, the mean square distance has a rapid drop at the early stage, but then the memory lingers for long time so that the convergence is very slow. This is also the case for normal fractional ODE \cite{fllx18}. In fact, in Fig. \ref{fig:density}, we plot the empirical density versus the Gibbs measure (in black line). When $\alpha=2-2H=0.8$, the distribution is already close to the desired Gibbs distribution at $t=7.5$. However, when $\alpha=2-2H=0.4$, the distribution is roughly like the Gibbs distribution but still has some difference even at $t=20$. In fact, this kind of distribution stays for long time ($t\sim 10^2$). We expect that when $t$ is very large, it can be close to the Gibbs distribution as in \cite{fang2018}. We choose not to do the simulation for $H=0.8$ and $t\sim 10^3$ since the complexity for obtaining a sample path is $O(N^2)$ and the simulation is expensive (we need $10^4$ samples).  For long time simulation when $H$ is close to $1$ (like $H=0.8$ for $T\gtrsim 100$), it is good to adopt the fast scheme in \cite{fang2018}. However, our scheme here is appropriate for dissipative problems due to its good stability properties, and can be used to analyze the time continuous problems compared with the one in \cite{fang2018}.

\section{Time fractional gradient flows}\label{sec:fgradientflow}

In this section, we investigate the time fractional gradient flows using the implicit scheme based on our discretization \eqref{eq:capnewdis2} and establish the error estimates of the numerical scheme. We will use $\langle \cdot, \cdot\rangle$ to denote the inner product in $H$ and $\|\cdot\|$ to denote the norm on $H$. 
We will focus on convex functionals $\phi$:
\begin{assumption}\label{ass:conv}
Suppose the functional $\phi$ is lower semi-continuous, convex and 
$\inf_{u\in \mathbb{R}^d} \phi(u)>-\infty$.
\end{assumption}
\begin{remark}
All the claims in section \ref{sec:fgradientflow} regarding convex functionals have analogies for $\lambda$-convex functionals (i.e. $\exists \lambda\ge 0$, $u\mapsto \phi(u)+\frac{\lambda}{2}|u|^2$ is convex; of course, the proof is more involved). Considering clarity of presentation, we only focus on convex functionals.
\end{remark}

The Frech\'et subdifferential of convex $\phi$ satisfies
\begin{gather}
\xi\in \partial\phi(u) \Leftrightarrow \partial\phi(v)\neq \emptyset, \forall w\in H, \phi(w)-\phi(v)
-\langle \xi, w-v\rangle \ge 0.
\end{gather}
The following strong-weak closure property is a straightforward consequence of this characterization. 
\begin{lemma}
Suppose Assumption \ref{ass:conv} holds. Assume sequences $\{\xi_n\}$ and $\{u_n\}$ satisfy $\xi_n \in \partial\phi(u_n)$ for all $n$, $u_n \to u$ strongly, and that $\xi_n \rightharpoonup \xi$ weakly.
Then $\xi\in \partial\phi(u)$.
\end{lemma}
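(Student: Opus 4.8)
The plan is to test the convex subgradient inequality at a fixed point and pass to the limit. By the characterization of the subdifferential recalled just above, $\xi_n\in\partial\phi(u_n)$ means precisely that for every $w\in H$,
\[
\phi(w)\ge \phi(u_n)+\langle \xi_n, w-u_n\rangle.
\]
Fixing an arbitrary $w$, I would take $\liminf_{n\to\infty}$ of the right-hand side and show it is bounded below by $\phi(u)+\langle\xi,w-u\rangle$. Since $\phi(w)$ does not depend on $n$ and $w$ is arbitrary, this is exactly the assertion $\xi\in\partial\phi(u)$.

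First I would record that the weakly convergent sequence $\{\xi_n\}$ is bounded, say $\sup_n\|\xi_n\|=:M<\infty$; this is the standard consequence of weak convergence via the uniform boundedness principle. The boundedness is what lets me control the ``mixed'' term coming from the splitting
\[
\langle \xi_n, w-u_n\rangle=\langle \xi_n, w-u\rangle+\langle \xi_n, u-u_n\rangle.
\]
The first summand converges to $\langle\xi,w-u\rangle$ because $\xi_n\rightharpoonup\xi$ is tested against the fixed vector $w-u$, while the second is estimated by $|\langle\xi_n,u-u_n\rangle|\le M\|u-u_n\|\to 0$ using the strong convergence $u_n\to u$. Hence $\langle\xi_n,w-u_n\rangle\to\langle\xi,w-u\rangle$.

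For the remaining term, the lower semicontinuity of $\phi$ together with $u_n\to u$ gives $\liminf_n\phi(u_n)\ge\phi(u)$. Combining this with the convergence of the inner-product term and the elementary inequality $\liminf_n(a_n+b_n)\ge\liminf_n a_n+\lim_n b_n$, valid because $b_n=\langle\xi_n,w-u_n\rangle$ converges, I obtain
\[
\phi(w)\ge \liminf_{n\to\infty}\big(\phi(u_n)+\langle\xi_n,w-u_n\rangle\big)\ge \phi(u)+\langle\xi,w-u\rangle.
\]
As $w\in H$ was arbitrary, the subgradient inequality holds at $u$, so $\xi\in\partial\phi(u)$.

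The only genuinely delicate point is the pairing of the weakly convergent sequence $\{\xi_n\}$ with the strongly convergent sequence $\{u_n\}$: neither factor alone controls the product $\langle\xi_n,w-u_n\rangle$, and one must exploit strong convergence in the $u$-slot against a uniformly bounded weak sequence in the $\xi$-slot. Everything else is routine; convexity enters only through the subgradient characterization, and all quantities remain finite because $\phi$ is real-valued.
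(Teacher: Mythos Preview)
Your proof is correct and is precisely the standard argument the paper has in mind when it calls the lemma ``a straightforward consequence of this characterization'' and omits the proof. The key steps---fixing $w$, using weak convergence of $\xi_n$ against $w-u$, using boundedness of $\{\xi_n\}$ together with strong convergence $u_n\to u$ to kill the cross term, and invoking lower semicontinuity for $\phi(u_n)$---are exactly what is needed, and nothing is missing.
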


Fix time $T>0$. Similarly as in \cite[Definition 2.2]{crandall1971}, we define
the following.
\begin{definition}\label{def:strongsol}
$u\in L_{\loc}^1([0, T), H)$ is called a strong solution to \eqref{eq:strongsol}, if  (i) $D_c^{\alpha}u$  is locally integrable on $[0, T)$; (ii)  $\lim_{t\to 0^+}\frac{1}{t}\int_0^t \|u(s)-u_0\|\,ds=0$  (iii) For almost every $t\in [0, T)$, we have $D_c^{\alpha}u\in -\partial\phi(u)$.
\end{definition}

\begin{remark}
If $\alpha=1$, the local integrability of the distributional derivative $Du$ clearly implies that $u$ is absolutely continuous on $[0, T_1]$ for any $T_1\in (0, T)$. The conditions (i)-(iii) in \cite[Definition 2.2]{crandall1971} are automatically satisfied. For $\alpha\in (0, 1)$, imposing $D_c^{\alpha}u\in L_{\loc}^1[0, T)$ does not ensure the uniqueness of $u_0$ (see \cite{liliu2018compact}). To kill this ambiguity, we impose $\lim_{t\to 0^+}\frac{1}{t}\int_0^t \|u(s)-u_0\|\,ds=0$.
\end{remark}

We aim to approximate the solutions of \eqref{eq:strongsol} (though the existence is unclear at this point), following  the method of De Giorgi \cite{de1993new,rossi2006}.
\begin{gather}\label{eq:discreteU}
U_n=\argmin\Big(\frac{1}{2k^{\alpha}}(\sum_{j=1}^{n-1}c_j\|u-U_{n-j}\|^2+c_n^n\|u-U_0\|^2)+\phi(u)\Big).
\end{gather}
Note that the functional on the right hand side of \eqref{eq:discreteU} is the sum of a convex function and some quadratic functionas. Then
\begin{gather}\label{eq:subdiffsum}
\partial \Big(\frac{1}{2k^{\alpha}}(\sum_{j=1}^{n-1}c_j\|u-U_{n-j}\|^2+c_n^n\|u-U_0\|^2)+\phi(u)\Big)
=k^{-\alpha}\Big(c_0u-(\sum_{j=1}^{n-1}c_jU_{n-j}+c_n^nU_0)\Big)+\partial\phi(u),
\end{gather}
and the numerical solution satisfies
\begin{gather}\label{eq:discretescheme}
 -\xi_n :=(\mathcal{D}^{\alpha}U)_n \in  -\partial\phi(U_n).
\end{gather}

Motivated by the proof of Theorem \ref{thm:schemeforlinear}, we consider the set of time steps
\begin{gather}
E_T=\{k>0: k=2^{-m}T,~m\in\mathbb{N} \}.
\end{gather}
The following results from \eqref{eq:subdiffsum} and the strong convexity of the functional in \eqref{eq:discreteU} (proof omitted).
\begin{lemma}\label{lmm:equivfortwominimalsol}
Suppose Assumption \ref{ass:conv} holds. Then, for sufficiently small $k\in E_T$, the discrete schemes \eqref{eq:discreteU} and \eqref{eq:discretescheme} are equivalent and they have a unique solution $\{U^n\}$.
\end{lemma}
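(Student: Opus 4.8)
The plan is to establish the two claims of Lemma~\ref{lmm:equivfortwominimalsol} in turn: first the equivalence of the variational scheme \eqref{eq:discreteU} and the subdifferential inclusion \eqref{eq:discretescheme}, and then existence and uniqueness of the minimizer. Both follow from the fact that the functional being minimized in \eqref{eq:discreteU},
\[
J_n(u) := \frac{1}{2k^{\alpha}}\Big(\sum_{j=1}^{n-1}c_j\|u-U_{n-j}\|^2+c_n^n\|u-U_0\|^2\Big)+\phi(u),
\]
is strongly convex for small $k$. First I would verify strong convexity: by Proposition~\ref{pro:coefficients}(1) the coefficients satisfy $c_0=\sum_{j=1}^{n-1}c_j+c_n^n>0$, so the quadratic part of $J_n$ equals $\frac{c_0}{2k^{\alpha}}\|u\|^2$ plus lower-order (linear and constant) terms in $u$; hence $J_n$ has the form $\frac{c_0}{2k^{\alpha}}\|u\|^2+(\text{linear})+\phi(u)$. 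Since $\phi$ is convex and lower semicontinuous by Assumption~\ref{ass:conv}, $J_n$ is $\frac{c_0}{k^{\alpha}}$-strongly convex, lower semicontinuous, and (using $\inf\phi>-\infty$) coercive, with $\inf J_n>-\infty$.

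Given strong convexity, existence and uniqueness of the minimizer $U_n$ follow from the standard direct method: coercivity forces any minimizing sequence to be bounded, hence weakly convergent along a subsequence in the Hilbert space $H$; weak lower semicontinuity of $J_n$ (the quadratic part is continuous and convex, $\phi$ is lsc and convex) shows the weak limit attains the infimum; and strict convexity rules out a second minimizer. I would run this inductively in $n$, so that at each step $U_0,\ldots,U_{n-1}$ are already determined and $J_n$ is a genuine functional of the single variable $u$. For the equivalence, the subdifferential sum rule \eqref{eq:subdiffsum}, which holds because the quadratic terms are everywhere Fr\'echet differentiable with the indicated gradient, identifies $0\in\partial J_n(U_n)$ — the first-order optimality condition characterizing the minimizer of a convex functional — precisely with the inclusion $k^{-\alpha}(c_0U_n-\sum_{j=1}^{n-1}c_jU_{n-j}-c_n^nU_0)\in-\partial\phi(U_n)$, which is \eqref{eq:discretescheme} after recognizing the left side as $(\mathcal{D}^{\alpha}U)_n$ via \eqref{eq:capnewdis2}.

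The only genuinely delicate point is the role of the restriction $k\in E_T$ and ``sufficiently small $k$''. For a purely convex $\phi$ the argument above works for every $k>0$, since $c_0/k^{\alpha}>0$ always gives strong convexity; the smallness of $k$ and the dyadic structure of $E_T$ are not needed for this lemma per se but are inherited from the framework set up for the later convergence analysis (mirroring the proof of Theorem~\ref{thm:schemeforlinear}(2), where dyadic refinement is used to compare solutions at different step sizes). I would therefore expect the main obstacle to be conceptual rather than technical: one must be careful that the smallness hypothesis is what makes the argument extend to the $\lambda$-convex case flagged in the Remark, where the quadratic part must dominate the possible concavity $-\frac{\lambda}{2}\|u\|^2$ of $\phi$, requiring $c_0/k^{\alpha}>\lambda$, i.e.\ $k^{\alpha}<c_0/\lambda$. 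For the stated convex case I would simply remark that strong convexity holds unconditionally and note the equivalence and well-posedness follow from the direct method and the first-order condition, as the paper itself indicates by omitting the proof.
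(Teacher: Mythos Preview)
Your proposal is correct and follows exactly the route the paper indicates: the paper omits the proof, stating only that the result ``results from \eqref{eq:subdiffsum} and the strong convexity of the functional in \eqref{eq:discreteU},'' which is precisely the argument you give via the direct method and the first-order optimality condition. Your observation that the smallness of $k$ is not actually needed in the purely convex case, but anticipates the $\lambda$-convex extension noted in the Remark, is also accurate.
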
 

\subsection{Properties of the discrete solutions}

Consider the solution given by \eqref{eq:discretescheme}. Define the function $V(t)$ such that
\begin{gather}
V(t) := -\xi_n,~t\in (t_{n-1}, t_n].
\end{gather}
Using the function $V$, define a natural continuous version interpolation of $U_n$ by 
\begin{gather}\label{eq:continuousextenU}
U(t)=U_0+\frac{1}{\Gamma(\alpha)}\int_0^t (t-s)^{\alpha-1}V(s)\,ds
\end{gather}
with $U(t_m)=U_m$. This continuous interplocation justifies why the discretization \eqref{eq:capnewdis2} is suitable for \eqref{eq:strongsol}.  By \eqref{eq:grouprelation} and the generalized definition (Definition \ref{def:caputo}), one has
\begin{gather}
D_c^{\alpha}U(t)=V(t).
\end{gather}

\begin{lemma}\label{lmm:controlfracint}
Assume Assumption \ref{ass:conv}. For $k \in E_T$ small enough, $\sup_{n: nk\le T}|\phi(U_n)|\le C(U_0, T)$ and
\begin{gather}
\sup_{t\le T}\frac{1}{\Gamma(\alpha)}\int_0^t(t-s)^{\alpha-1}\|V(s)\|^2\,ds\le C(U_0, T).
\end{gather}
\end{lemma}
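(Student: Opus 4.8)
The plan is to establish a discrete energy–dissipation inequality along the scheme \eqref{eq:discretescheme}, use it to bound $\phi(U_n)$, and then recognize the weighted integral of $\|V\|^2$ as a discrete fractional integral that can be inverted against the dissipation. First I would prove the subgradient analogue of Theorem \ref{thm:implicitcomp}(1), namely
\[
(\mathcal{D}^\alpha \phi(U))_n \le \langle (\mathcal{D}^\alpha U)_n, \xi_n\rangle = -\|\xi_n\|^2,
\]
where $\xi_n\in\partial\phi(U_n)$ and $(\mathcal{D}^\alpha U)_n=-\xi_n$ by \eqref{eq:discretescheme}. Writing $(\mathcal{D}^\alpha U)_n=k^{-\alpha}(\sum_{i=1}^{n-1}c_i(U_n-U_{n-i})+c_n^n(U_n-U_0))$ and using the convexity inequalities $\langle \xi_n, U_n-U_{n-i}\rangle\ge \phi(U_n)-\phi(U_{n-i})$ and $\langle \xi_n, U_n-U_0\rangle\ge \phi(U_n)-\phi(U_0)$, the exact rearrangement in the proof of Theorem \ref{thm:implicitcomp}(1) gives the claim, the only change being that the subgradient $\xi_n$ replaces $\nabla E(X_n)$.

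From $(\mathcal{D}^\alpha\phi(U))_n\le 0$ I would deduce the first bound. By Proposition \ref{pro:coefficients} we have $c_0=\sum_{i=1}^{n-1}c_i+c_n^n$, so any constant sequence has vanishing discrete Caputo derivative; applying Theorem \ref{thm:implicitcomp}(2) with $f\equiv 0$ (comparing $\{\phi(U_n)\}$ with the constant sequence $\phi(U_0)$) yields $\phi(U_n)\le\phi(U_0)$ for all $n$. Together with $\inf\phi>-\infty$ from Assumption \ref{ass:conv}, this gives $|\phi(U_n)|\le\max(|\phi(U_0)|,|\inf\phi|)=:C(U_0)$, the first assertion.

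For the integral bound, since $V(s)=-\xi_m$ on $(t_{m-1},t_m]$, a direct evaluation of $\int_{t_{m-1}}^{t_m}(t_n-s)^{\alpha-1}\,ds$ together with the definition \eqref{eq:seqa} of $a$ shows that at grid points
\[
\frac{1}{\Gamma(\alpha)}\int_0^{t_n}(t_n-s)^{\alpha-1}\|V(s)\|^2\,ds=k^\alpha\sum_{m=1}^n a_{n-m}\|\xi_m\|^2=(J_k\Xi)_n,\qquad \Xi_m:=\|\xi_m\|^2,\ \Xi_0:=0,
\]
with $J_k$ the discrete integral of \eqref{eq:discretefracint}. Since the coefficients $a_j>0$, $J_k$ is order preserving, and since it inverts $\mathcal{D}^\alpha$ while $\Xi_m\le -(\mathcal{D}^\alpha\phi(U))_m$ by Step 1, I obtain $(J_k\Xi)_n\le -\big(J_k(\mathcal{D}^\alpha\phi(U))\big)_n=\phi(U_0)-\phi(U_n)\le \phi(U_0)-\inf\phi$. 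For a general $t\in(t_{n-1},t_n]$ I would split the integral at $t_{n-1}$: on $[0,t_{n-1}]$ the monotonicity $(t-s)^{\alpha-1}\le (t_{n-1}-s)^{\alpha-1}$ (valid because $\alpha<1$ and $t>t_{n-1}$) bounds that piece by $(J_k\Xi)_{n-1}\le\phi(U_0)-\inf\phi$, while the last partial interval contributes $\frac{\|\xi_n\|^2}{\Gamma(1+\alpha)}(t-t_{n-1})^\alpha\le \frac{k^\alpha\|\xi_n\|^2}{\Gamma(1+\alpha)}$. The uniform estimate $k^\alpha\|\xi_n\|^2\le c_0(\phi(U_0)-\inf\phi)$, read off from $\|\xi_n\|^2\le -(\mathcal{D}^\alpha\phi(U))_n$ using $\phi(U_{n-i})\le\phi(U_0)$, $\phi(U_n)\ge\inf\phi$ and $c_0=\Gamma(1+\alpha)$, makes this last term at most $\phi(U_0)-\inf\phi$; adding the two pieces gives the desired $C(U_0,T)$.

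The step requiring the most care is the discrete chain-rule inequality of Step 1 for the merely convex, lower semicontinuous functional $\phi$ through its subgradient, since this is where the structure of $\mathcal{D}^\alpha$ and the definition \eqref{eq:discretescheme} of $\xi_n$ must be combined correctly. Once this discrete energy dissipation is available, the remaining work is the routine inversion of $J_k$ against $\mathcal{D}^\alpha$ and the elementary kernel-monotonicity argument handling intermediate times.
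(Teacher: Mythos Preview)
Your proposal is correct and follows essentially the same route as the paper: the discrete chain-rule inequality $(\mathcal{D}^\alpha\phi(U))_n\le -\|\xi_n\|^2$ via convexity and the subgradient, inversion through $J_k$ to get $\phi(U_n)-\phi(U_0)\le -(J_k\Xi)_n$ (which yields both bounds at grid points simultaneously), and a kernel-monotonicity splitting for intermediate $t$. The only cosmetic difference is that for the last partial interval the paper bounds $\|\xi_n\|^2\int_{t_{n-1}}^{t_n}(t_n-s)^{\alpha-1}\,ds$ directly by the full grid integral at $t_n$, rather than extracting your separate uniform bound on $k^\alpha\|\xi_n\|^2$; both arguments give the same constant $2(\phi(U_0)-\inf\phi)$.
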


\begin{proof}
Paring with $\xi_n=-(\mathcal{D}^{\alpha}U)_n\in \partial\phi(U_n)$, and noting $\langle \xi_n, U_n-U_{j}\rangle \ge \phi(U_n)-\phi(U_j)$, one has
\[
-\|\xi_n\|^2=k^{-\alpha}\left(\sum_{j=1}^{n-1}c_j\langle \xi_n, U_n-U_{n-j}\rangle
+c_n^n\langle \xi_n, U_n-U_0\rangle \right)
\ge (\mathcal{D}^{\alpha}\phi(U))_n.
\]
Using the equivalence between \eqref{eq:capnewdis} and \eqref{eq:discretefracint}, and nonnegativity of $\{a_m\}$, one has
\[
\phi(U_n)-\phi(U_0)\le -k^{\alpha}\sum_{m=1}^n a_{n-m}\|\xi_m\|^2
= -\frac{1}{\Gamma(\alpha)}\int_0^{t_n}(t_n-s)^{\alpha-1}\|V(s)\|^2\,ds.
\]
The first claim and the second claim with $t=t_n$ holds.
For general $t\in (t_{n-1}, t_{n})$, the following trivial observation with the result just proved yields the claim in the statement of the lemma.
\begin{gather*}
\begin{split}
\int_0^{t}(t-s)^{\alpha-1}\|V(s)\|^2\,ds
& =\int_0^{t_{n-1}}(t-s)^{\alpha-1}\|V(s)\|^2\,ds
+\|\xi_{n}\|^2\int_{t_{n-1}}^{t}(t-s)^{\alpha-1}\,ds\\
& \le \int_0^{t_{n-1}}(t_{n-1}-s)^{\alpha-1}\|V(s)\|^2\,ds
+\|\xi_{n}\|^2\int_{t_{n-1}}^{t_{n}}(t_{n}-s)^{\alpha-1}\,ds \\
& \le \int_0^{t_{n-1}}(t_{n-1}-s)^{\alpha-1}\|V(s)\|^2\,ds+\int_0^{t_n}(t_n-s)^{\alpha-1}\|V(s)\|^2\,ds.
\end{split}
\end{gather*}

\end{proof}

Now, we compare the numerical solutions with different time steps.
\begin{lemma}\label{lmm:functionseq}
There exists $C(T, U_0)$ independent of $k$ such that when $k$ is small enough, 
\begin{gather}
\|U(t)-U(t+\delta)\|\le C(T, U_0)|\delta|^{\alpha/2},~\text{if}~\max(t, t+\delta)\le T.
\end{gather} 
Let $U_{i}(t)$ be two functions given by \eqref{eq:continuousextenU} for step sizes $k_i\in E_T$ ($i=1,2$).  Then,
\begin{gather}
\sup_{0\le t\le T}\|U_1(t)-U_2(t)\|^2
\le C(U_0,\alpha)(k_1^{\alpha/2}+k_2^{\alpha/2}).
\end{gather}
\end{lemma}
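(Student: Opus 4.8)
The lemma has two parts: a Hölder-in-time estimate for a single interpolation $U(t)$, and a comparison between two interpolations $U_1, U_2$ built from dyadically related step sizes. I will treat them in that order, since the Hölder bound feeds into the comparison argument. The key quantitative input throughout is Lemma \ref{lmm:controlfracint}, which gives the uniform $k$-independent bound
\[
\sup_{t\le T}\frac{1}{\Gamma(\alpha)}\int_0^t (t-s)^{\alpha-1}\|V(s)\|^2\,ds\le C(U_0,T).
\]

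\emph{Hölder continuity of $U$.} The plan is to write, for $0\le t\le t+\delta\le T$,
\[
U(t+\delta)-U(t)=\frac{1}{\Gamma(\alpha)}\int_0^{t+\delta}(t+\delta-s)^{\alpha-1}V(s)\,ds-\frac{1}{\Gamma(\alpha)}\int_0^{t}(t-s)^{\alpha-1}V(s)\,ds
\]
and split this into the ``new'' piece on $(t,t+\delta)$ plus the difference of kernels on $(0,t)$. For the new piece I apply Cauchy--Schwarz against the weight $(t+\delta-s)^{\alpha-1}$, pulling out $\big(\int_t^{t+\delta}(t+\delta-s)^{\alpha-1}\,ds\big)^{1/2}=(\delta^\alpha/\alpha)^{1/2}$ and bounding the remaining weighted $L^2$-norm of $V$ by Lemma \ref{lmm:controlfracint}; this yields the $\delta^{\alpha/2}$ rate. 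For the kernel-difference piece, the integrand involves $(t-s)^{\alpha-1}-(t+\delta-s)^{\alpha-1}\ge 0$, and I again use Cauchy--Schwarz, this time noting that $\int_0^t\big((t-s)^{\alpha-1}-(t+\delta-s)^{\alpha-1}\big)\,ds\le C\delta^{\alpha}$ (an elementary monotone-kernel estimate), while the $V$-weighted factor is controlled by the same uniform bound. Combining gives $\|U(t+\delta)-U(t)\|\le C(T,U_0)|\delta|^{\alpha/2}$.

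\emph{Comparison of $U_1$ and $U_2$.} Here I would reduce, exactly as in the proof of Theorem \ref{thm:schemeforlinear}(2), to the dyadic step $k_1=2k_2$, since the general case follows by chaining. The idea is to use the variational characterization \eqref{eq:discreteU}: $U_n$ minimizes a strongly convex functional, so testing the minimality of $U_n^{(1)}$ against the competitor coming from $U^{(2)}$ (and vice versa) produces a one-sided energy inequality. Concretely, I would apply the first-order optimality condition \eqref{eq:discretescheme}, i.e. $-(\mathcal{D}^\alpha U^{(i)})_n=\xi_n^{(i)}\in\partial\phi(U_n^{(i)})$, pair the difference $\xi_n^{(1)}-\xi_n^{(2)}$ with $U_n^{(1)}-U_n^{(2)}$, and use monotonicity of $\partial\phi$ (from convexity) to get $\langle \xi_n^{(1)}-\xi_n^{(2)},U_n^{(1)}-U_n^{(2)}\rangle\ge 0$. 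The crux is that because the two grids are nested, the discrete Caputo operators at matching times differ only by the mismatch between a coarse convolution weight and the sum of two fine weights; this mismatch is where the $k_i^{\alpha/2}$ loss enters, and I would estimate it using the previously established Hölder bound $\|U_i(t+\delta)-U_i(t)\|\le C|\delta|^{\alpha/2}$ together with the coefficient asymptotics from Proposition \ref{pro:coefficients}.

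\emph{Main obstacle.} The genuinely delicate step is the comparison part: the discrete fractional operator is nonlocal, so pairing the subdifferential inclusions at a single index $n$ does not immediately telescope, and one cannot simply invoke the convex-functional estimate Theorem \ref{thm:implicitcomp}(1) at one time level to close a Grönwall-type loop. I expect the right device is to apply Theorem \ref{thm:implicitcomp}(1) to $E(u)=\tfrac12\|u\|^2$ to turn the pointwise monotonicity into a discrete inequality $(\mathcal{D}^\alpha\|U^{(1)}-U^{(2)}\|^2)_n\le$ (a forcing term capturing the grid mismatch), then convert to the integral form via \eqref{eq:discretefracint} and bound the accumulated mismatch using the Hölder estimate and Lemma \ref{lmm:controlfracint}. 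Controlling that mismatch uniformly in $k$, so that the final constant $C(U_0,\alpha)$ is genuinely independent of the step size while still producing the $k_1^{\alpha/2}+k_2^{\alpha/2}$ rate, is the technical heart of the argument.
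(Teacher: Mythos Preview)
Your treatment of the H\"older estimate is essentially the paper's argument: split into the new piece on $(t,t+\delta)$ and the kernel-difference piece on $(0,t)$, apply Cauchy--Schwarz against the weight $(t-s)^{\alpha-1}$ in each, and invoke Lemma~\ref{lmm:controlfracint} for the weighted $L^2$-norm of $V$. The paper factorizes the kernel-difference integrand as $(t-s)^{\alpha-1}\bigl(1-(\tfrac{t+\delta-s}{t-s})^{\alpha-1}\bigr)$ so that one factor of Cauchy--Schwarz is exactly the bound in Lemma~\ref{lmm:controlfracint}, but this is only a cosmetic refinement of what you wrote.

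For the comparison part your plan diverges from the paper's and leaves a real gap. You propose to reduce to nested grids $k_1=2k_2$ and apply the \emph{discrete} convexity inequality Theorem~\ref{thm:implicitcomp}(1) to the sequence $n\mapsto U_n^{(1)}-U_{2n}^{(2)}$. The obstacle you flag is exactly the one that bites: the coarse discrete Caputo of $n\mapsto U_{2n}^{(2)}$ is \emph{not} $-\xi_{2n}^{(2)}$, so the ``grid mismatch'' forcing term is the consistency error between $(\mathcal{D}^{\alpha}_{k_1})$ and $(\mathcal{D}^{\alpha}_{k_2})$ acting on the fine solution. You do not say how to bound this in a way that survives fractional integration and delivers the $k_i^{\alpha/2}$ rate; appealing to the coefficient asymptotics in Proposition~\ref{pro:coefficients} and the H\"older bound is not enough to close this without substantial additional work.

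The paper avoids the issue by lifting to the continuous interpolations. Since each $U_i$ is the exact fractional integral \eqref{eq:continuousextenU}, one has $D_c^{\alpha}U_i(t)=V_i(t)$ with the \emph{same} continuous operator on both sides, and the continuous analogue $\tfrac12 D_c^{\alpha}\|U_1-U_2\|^2\le \langle D_c^{\alpha}(U_1-U_2),U_1-U_2\rangle$ applies because the $U_i$ are absolutely continuous. For each fixed $t$ one then writes
\[
\langle V_1(t)-V_2(t),\,U_1(t)-U_2(t)\rangle
= -\langle \xi_{n_1}^{(1)}-\xi_{n_2}^{(2)},\,U_{n_1}^{(1)}-U_{n_2}^{(2)}\rangle + R(t),
\]
with $n_i$ the index for which $t\in(t_{n_i-1}^{(i)},t_{n_i}^{(i)}]$. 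Monotonicity of $\partial\phi$ kills the first term, and the remainder $R(t)$ depends only on the gaps $\Delta_i(t)=U_{n_i}^{(i)}-U_i(t)$, which are controlled directly by the H\"older-type estimate of Part~1 with $\delta\le k_i$. This gives $D_c^{\alpha}\|U_1-U_2\|^2\le C(k_1^{\alpha/2}+k_2^{\alpha/2})(\|V_1\|^2+\|V_2\|^2)$, and a fractional integration together with Lemma~\ref{lmm:controlfracint} finishes. No dyadic nesting is actually used; the restriction $k_i\in E_T$ plays no role in this argument.
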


\begin{proof}

Without loss of generality, we assume $\delta>0$. Then, by \eqref{eq:continuousextenU},
\[
\begin{split}
\|U(t)-U(t+\delta)\| \le &~\frac{1}{\Gamma(\alpha)}\Big(\int_0^t [(t-s)^{\alpha-1}-(t+\delta-s)^{\alpha-1}]\|V(s)\|\,ds \\
&+\int_{t}^{t+\delta}(t+\delta-s)^{\alpha-1}\|V(s)\|\,ds\Big)=:I_1+I_2.
\end{split}
\]
The second term is estimated easily by 
\begin{gather*}
I_2\le \frac{1}{\Gamma(\alpha)}\left(\int_t^{t+\delta}\|V(s)\|^2(t+\delta-s)^{\alpha-1}\,ds \right)^{1/2}
\left(\int_t^{t+\delta}(t+\delta-s)^{\alpha-1}\,ds \right)^{1/2}\le C\delta^{\alpha/2}.
\end{gather*}
For the first term $I_1$, we have by H\"older inequality:
\begin{gather}\label{eq:discreteaux1}
I_1\le \frac{1}{\Gamma(\alpha)}\left(\int_0^t(t-s)^{\alpha-1}\Big(1-(\frac{t+\delta-s}{t-s})^{\alpha-1}\Big)^2\,ds\right)^{1/2}
\left(\int_0^t (t-s)^{\alpha-1}\|V(s)\|^2\,ds \right)^{1/2}.
\end{gather}
Clearly,
\begin{gather}\label{eq:discreteaux2}
\int_0^t(t-s)^{\alpha-1}\Big(1-(\frac{t+\delta-s}{t-s})^{\alpha-1}\Big)^2\,ds
\le \int_0^t((t-s)^{\alpha-1}-(t+\delta-s)^{\alpha-1})\,ds 
\le C\delta^{\alpha}.
\end{gather}
The claim follows.

To compare the numerical solutions with steps $k_i,i=1,2$  ($V_i$, $\xi_n^{(i)}$, $i=1,2$ similarly defined), we fix $t\in [0, T]$. Then, there exist $n_1$ and $n_2$ such that
$t\in ((n_1-1)k_1, n_1k_1]\cap ((n_2-2)k_2, n_2k_2]$ and such that $V_i(t)=-\xi^{(i)}_{n_i}$.
Denote
\[
\Delta_i(t) :=U^{(i)}_{n_i}-U_i(t),~~i=1,2.
\]

By the definition of $U_i(t)$ and convexity of $\phi$, one has
\begin{gather}\label{eq:fractionalgradientkeyestimate}
\begin{split}
\langle D_c^{\gamma}U_1-D_c^{\gamma}U_2, U_1-U_2\rangle
 &=\langle V_1(t)-V_2(t), U_1(t)-U_2(t)\rangle \\
 & =-\langle \xi^{(1)}_{n_1}-\xi^{(2)}_{n_2}, U^{(1)}_{n_1}-U^{(2)}_{n_2}\rangle+R(t) \le R(t),
\end{split}
\end{gather}
where $\langle \xi^{(1)}_{n_1}-\xi^{(2)}_{n_2}, U^{(1)}_{n_1}-U^{(2)}_{n_2}\rangle \ge 0$ by convexity of $\phi$ and
\begin{gather*}
R(t)=-\langle V_1(t)-V_2(t), \Delta_1(t)\rangle
+\langle V_1(t)-V_2(t), \Delta_2(t)\rangle .
\end{gather*}
It suffices to estimate $\Delta_i(t)$. We take $i=1$ as the example.
By the definition of $U_1(t)$, 
\begin{gather*}
\Gamma(\alpha)\Delta_1(t)
=-\int_{t}^{n_1k_1}(n_1k_1-s)^{\alpha-1}\xi^{(1)}_{n_1}\,ds
+\int_{0}^{t}[(n_1k_1-s)^{\alpha-1}-(t-s)^{\alpha-1}]V_1(s)\,ds=:I_1^1+I_1^2.
\end{gather*}
The terms corresponding to $I_1^1$ are controlled by (noting $0< n_1k_1-t\le k_1$)
\begin{gather*}
-\langle V_1(t)-V_2(t), I_1^1\rangle=
\langle \xi^{(1)}_{n_1}-\xi^{(2)}_{n_2}, I_1^1\rangle
=\frac{(n_1k_1-t)^{\alpha}}{\Gamma(1+\alpha)}\langle \xi^{(2)}_{n_2}-\xi^{(1)}_{n_1}, \xi^{(1)}_{n_1}\rangle 
\le Ck_1^{\alpha}(\|V_1(t)\|^2+\|V_2(t)\|^2).
\end{gather*}
The terms corresponding to $I_1^2$ can be estimated similarly as in \eqref{eq:discreteaux1}-\eqref{eq:discreteaux2}.
\begin{gather*}
\begin{split}
\langle \xi^{(1)}_{n_1}-\xi^{(2)}_{n_2}, I_1^2\rangle
& \le \frac{1}{\Gamma(\alpha)}\|V_1(t)-V_2(t)\|
\int_0^t[(t-s)^{\alpha-1}-(n_1k_1-s)^{\alpha-1}]\|V_1(s)\|\,ds\\
& \le C(\alpha, U_0)\|V_1(t)-V_2(t)\| k_1^{\alpha/2}.
\end{split}
\end{gather*}

By the explicit formula \eqref{eq:continuousextenU},   $U_i(t)$ is absolutely continuous. Proposition 3.11 in \cite{liliu2018} can be easily generalized to show that
\[
\frac{1}{2}D_c^{\alpha}\|U_1(t)-U_2(t)\|^2
\le \langle D_c^{\alpha}(U_1-U_2)(t), U_1(t)-U_2(t)\rangle.
\]

Overall,
\[
D_c^{\alpha}(\|U_1(t)-U_2(t)\|^2)
\le C(k_1^{\alpha/2}+k_2^{\alpha/2})(\|V_1(t)\|^2
+\|V_2(t)\|^2).
\]
Lemma \ref{lmm:controlfracint} then yields the result.
\end{proof}

\subsection{Well-posedness and numerical error estimates}

In this subsection, we establish the existence and uniqueness of the time fractional gradient flow under some assumptions and give the error estimate of the numerical scheme. Besides Assumption \ref{ass:conv}, we also need a certain regularity property of the subdifferential mapping $\partial\phi$. 
\begin{assumption}\label{ass:lambdaconv}
Let Assumption \ref{ass:conv} hold. Moreover, $v_n\to v$ strongly implies any sequence $\{\xi_n\}$ with $\xi_n\in \partial\phi(v_n)$ converges weakly to some $\xi\in H$.
\end{assumption}

\begin{theorem}\label{thm:fracgradient}
Suppose Assumption \ref{ass:lambdaconv} holds. For any $T>0$, the fractional gradient flow \eqref{eq:strongsol} has a unique strong solution $u$ on $[0, T)$ in the sense of Definition \eqref{def:strongsol}. The strong solution is H\"older continuous on $[0, T)$:
\begin{gather}
\|u(t+\delta)-u(t)\|\le C|\delta|^{\alpha/2}.
\end{gather}
Besides, we have the following error estimates for the numerical solution \eqref{eq:discreteU}:
\begin{gather}
\sup_{n: nk\le T}\|U_n-u(t_n)\|\le C(\alpha, T)k^{\alpha/4}.
\end{gather}
\end{theorem}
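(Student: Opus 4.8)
The plan is to construct the strong solution as the uniform limit of the continuous interpolations $U_k$ from \eqref{eq:continuousextenU} as the step size $k\to 0$ along $E_T$, and to extract the error estimate from the very same comparison. First I would fix the dyadic sequence $k_m=2^{-m}T\in E_T$. By Lemma \ref{lmm:equivfortwominimalsol} the discrete scheme has a unique solution for each small $k_m$, and the second estimate of Lemma \ref{lmm:functionseq}, after taking square roots, gives
\[
\sup_{0\le t\le T}\|U_{k_m}(t)-U_{k_{m'}}(t)\|\le C\bigl(k_m^{\alpha/4}+k_{m'}^{\alpha/4}\bigr).
\]
Hence $\{U_{k_m}\}$ is Cauchy in the Banach space $C([0,T];H)$ and converges uniformly to some $u\in C([0,T];H)$. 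Sending $k_{m'}\to 0$ in the displayed inequality yields $\sup_{0\le t\le T}\|U_k(t)-u(t)\|\le Ck^{\alpha/4}$; since $U_k(t_n)=U_n$, restricting to the grid gives the asserted bound $\sup_{n:nk\le T}\|U_n-u(t_n)\|\le C(\alpha,T)k^{\alpha/4}$. The H\"older regularity of $u$ is then inherited from the first estimate of Lemma \ref{lmm:functionseq}: the bound $\|U_k(t+\delta)-U_k(t)\|\le C|\delta|^{\alpha/2}$ is uniform in $k$, so it passes to the uniform limit.

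The main work, and the step I expect to be the chief obstacle, is to verify that the limit $u$ is genuinely a strong solution, i.e.\ that $D_c^{\alpha}u(t)\in-\partial\phi(u(t))$ for a.e.\ $t$. I would start from the uniform weighted bound $\int_0^T(T-s)^{\alpha-1}\|V_k(s)\|^2\,ds\le C$ of Lemma \ref{lmm:controlfracint}, which makes $\{V_k\}$ bounded, hence weakly precompact, in the reflexive weighted space $L^2([0,T],(T-s)^{\alpha-1}\,ds;H)$; extract a subsequence $V_k\rightharpoonup V$. Because $U_k\to u$ uniformly, $D_c^{\alpha}U_k=V_k\to D_c^{\alpha}u$ in the sense of distributions on $(0,T)$, while weak convergence in the weighted space implies weak $L^2_{\loc}$ convergence on compact subsets of $[0,T)$, where the weight is bounded above and below; comparing distributional limits identifies $V=D_c^{\alpha}u$.

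To pin down the inclusion I would exploit convexity in the discrete inequality $\phi(w)\ge\phi(\bar U_k(t))-\langle V_k(t),w-\bar U_k(t)\rangle$, where $\bar U_k$ is the piecewise-constant interpolant equal to $U_n$ on $(t_{n-1},t_n]$. Multiplying by an arbitrary nonnegative $\zeta\in C_c^{\infty}([0,T))$, integrating in $t$, and passing to the limit, lower semicontinuity of $\phi$ and Fatou's lemma control the $\int\zeta\,\phi(\bar U_k)$ term from below, the weak convergence of $V_k$ handles $\int\zeta\langle V_k,w\rangle$, and the weak--strong product convergence of $\langle V_k,\bar U_k\rangle$ (using $\bar U_k\to u$ uniformly, which follows from the H\"older bound together with $U_k\to u$) handles the remaining term. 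This yields $\int_0^T\zeta(t)\bigl(\phi(w)-\phi(u(t))+\langle V(t),w-u(t)\rangle\bigr)\,dt\ge 0$; since $\zeta\ge0$ is arbitrary and $w$ may be taken in a countable dense set, $-V(t)\in\partial\phi(u(t))$ for a.e.\ $t$. Assumption \ref{ass:lambdaconv} together with the strong--weak closure lemma supplies the compactness and closedness of $\partial\phi$ needed along the strong convergence $\bar U_k(t)\to u(t)$. The delicate points are the degeneracy of the weight $(T-s)^{\alpha-1}$ near $s=T$, which forces localization to compact subsets and a separate endpoint argument, and the reconciliation of the piecewise-constant evaluation $\bar U_k$ with the continuous limit $u$.

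Finally, uniqueness I would establish by a monotonicity-plus-energy argument: if $u_1,u_2$ are two strong solutions with $u_1(0)=u_2(0)$, then monotonicity of $\partial\phi$ gives $\langle D_c^{\alpha}u_1-D_c^{\alpha}u_2,u_1-u_2\rangle\le 0$, and the generalized chain-rule inequality (the extension of \cite[Proposition 3.11]{liliu2018} already used in the proof of Lemma \ref{lmm:functionseq}) gives $\tfrac12 D_c^{\alpha}\|u_1-u_2\|^2\le 0$. Since $\|u_1(0)-u_2(0)\|^2=0$, the comparison principle for the Caputo derivative forces $\|u_1-u_2\|^2\equiv 0$. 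The only technical care needed is to justify the fractional chain rule at the available regularity (H\"older continuity with locally integrable Caputo derivative), which can be arranged via the integral representation \eqref{eq:continuousextenU} of the solution.
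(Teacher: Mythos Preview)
Your overall architecture matches the paper's: construct $u$ as the uniform limit of $U_k$ via the Cauchy estimate of Lemma~\ref{lmm:functionseq}, read off the error bound by sending one step size to zero, inherit the H\"older regularity, identify $D_c^{\alpha}u$ from the integral representation, and conclude uniqueness from monotonicity plus the fractional chain-rule inequality. On these points you and the paper agree.

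The substantive difference is how the inclusion $D_c^{\alpha}u(t)\in-\partial\phi(u(t))$ is obtained. The paper uses Assumption~\ref{ass:lambdaconv} \emph{directly}: since $\bar U_k(t)\to u(t)$ strongly for every fixed $t$, that assumption forces $V_k(t)\rightharpoonup \bar v(t)$ pointwise in $H$, and the strong--weak closure lemma gives $-\bar v(t)\in\partial\phi(u(t))$; a separability argument then identifies the pointwise weak limit $\bar v$ with the $L^2$-weak limit $v=D_c^{\alpha}u$. Your route is a Minty-type argument: integrate the subgradient inequality against a nonnegative test function and pass to the limit using $\bar U_k\to u$ uniformly and $V_k\rightharpoonup V$ weakly in $L^2$. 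This is a genuinely different (and in fact more robust) argument: it requires only convexity (Assumption~\ref{ass:conv}) and the maximal monotonicity of the lifted operator $u\mapsto\partial\phi(u(\cdot))$ on $L^2(0,T;H)$, so Assumption~\ref{ass:lambdaconv} becomes superfluous for existence. The paper's approach, by contrast, is more direct once the extra pointwise compactness of Assumption~\ref{ass:lambdaconv} is granted, and avoids the density/separability step inside the Minty argument.

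Two small corrections to your write-up. First, your concern about the weight $(T-s)^{\alpha-1}$ ``degenerating'' near $s=T$ is inverted: since $\alpha<1$ the weight blows up there, so it is bounded below by $T^{\alpha-1}$ on $[0,T)$ and the weighted bound immediately gives a uniform bound in unweighted $L^2(0,T;H)$, which is what the paper uses; no endpoint localization is needed. Second, you invoke Assumption~\ref{ass:lambdaconv} at the end of the inclusion argument, but your Minty computation does not actually use it; if you keep the Minty route, you can drop that sentence.
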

\begin{proof}[Proof of Theorem \ref{thm:fracgradient}]
By Lemma \ref{lmm:functionseq}, the family $\{ U_i(\cdot): i\in E_T\}$ is a Cauchy sequence in $C([0, T], H)$.
 Therefore, there exists $u(\cdot)\in C([0, T]; H)$, such that $U_i(\cdot)$ converges to $u$ in $C([0, T]; H)$.

Consider the piecewise constant interpolation of the numerical fractional derivative, $\{V_i(t)\}$.
Lemma \ref{lmm:controlfracint} implies that  $V_i(t)\in L^2(0, T)$ with the $L^2(0, T)$ norm uniformly bounded. Then, there is a further subsequence so that we have the weak convergent sequence $V_i(t) \rightharpoonup v(t)$ in $L^2(0, T; H)$, with the estimate
\[
\sup_{t\le T}\frac{1}{\Gamma(\alpha)}\int_0^t(t-s)^{\alpha-1}\|v(s)\|^2\,ds\le C_2(U_0, T).
\]
With the convergence in hand, $u(t)=u_0+\frac{1}{\Gamma(\alpha)}\int_0^t(t-s)^{\alpha-1}v(s)\,ds$.
Hence, in the distributional sense,
\begin{gather}\label{eq:caputoutov}
D_c^{\alpha}u=v(t).
\end{gather}
Similarly as in the proof of Lemma \ref{lmm:functionseq}, $u(\cdot)$ is H\"older continuous
with order at least $\alpha/2$.

By Lemma \ref{lmm:controlfracint}, we have found that $\phi(U_n)$ is uniformly bounded. 
Now we consider piecewise linear interpolation of $\{U_n\}$, denoted by $\bar{U}_i(\cdot)$,
\[
\bar{U}_i(t)=U_n^{(i)},~~t\in (t_{n-1}, t_n].
\]
Then, $V_i(t)\in -\partial\phi(\bar{U}_i(t)),~\forall t\in [0, T),~\forall i$.  By the uniform H\"older continuity of $U_i(t)$ in Lemma \ref{lmm:functionseq}  and $\bar{U}_i(t)=U_i(t_n)$
for $t\in (t_{n-1}, t_n]$, we find that for all $t$, $\bar{U}_i(t)$ converges
strongly to $u(t)$.  With Assumption \ref{ass:lambdaconv}, we find that for all $t$,
\begin{gather}\label{eq:pointwiseweak}
V_i(t) \rightharpoonup \bar{v}(t),~H,
\end{gather}
so that
\begin{gather}\label{eq:limitsubgrad}
\bar{v}(t)\in -\partial\phi(u(t)),
\end{gather}
by the weak--strong closure property.

Hence, for any $w\in H$ so that $w1_{[0, T]} \in L^2(0, T; H)$, we find that
$\langle V_i(t), w\rangle$ converges in $L^2(0, T; \mathbb{R})$ to $\langle v(t), w\rangle$.
By \eqref{eq:pointwiseweak}, we also have $\langle V_i(t), w\rangle \to \langle \bar{v}(t), w\rangle$ 
for all $t\in [0, T]$. Consequently, for a.e. $t\in [0, T]$, we have
\[
\langle v(t), w\rangle = \langle \bar{v}(t), w\rangle.
\]
Since $H$ is separable, we can find a basis consisting of countable elements $\{w_n\}$. Consequently, we have
\[
\langle v(t)-\bar{v}(t), w_n\rangle =0,~\forall n, a.e. t\in [0, T]
\]
Moreover, $v-\bar{v}\in L^2(0, T; H')$ (where $H'$ means $H$ equipped with the weak star topology), we have then
\[
\int_0^T \langle v(t)-\bar{v}(t), w(t)\rangle\,dt=0
\]
for $w(t)$ to be simple functions, and then $L^2(0, T; H)$ functions. Hence, $v(t)=\bar{v}(t)$ for a.e. $t\in [0, T]$.
By \eqref{eq:limitsubgrad} and \eqref{eq:caputoutov}, we find that $u(t)$ is a strong solution under Definition \ref{def:strongsol}. Taking limit $k_2\to 0$ in $\|U_1(t)-U_2(t)\|^2\le C(U_0, \alpha)(k_1^{\alpha/2}+k_2^{\alpha/2})$ yields $\|U_n(t) -u(t)\|^2 \le C(U_0, \alpha)k^{\alpha/2}$.

For the uniqueness, suppose we have two strong solutions $u_i(t),i=1,2$ such that $-\xi_i(t):=D_c^{\alpha}u_i(t)\in -\partial\phi(u_i(t))$. Then, $\langle u_1-u_2, D_c^{\alpha}(u_1-u_2)\rangle\le 0$. Some regularization procedure can yield $\frac{1}{2}D_c^{\alpha}\|u_1-u_2\|^2\le \langle u_1-u_2, D_c^{\alpha}(u_1-u_2)\rangle\le 0$. This then implies uniqueness.
\end{proof}

As a concluding remark, the orders of estimates in Theorem \ref{thm:fracgradient} are not optimal. If one can show that $\xi_n$ is bounded, then one can improve the orders.  Lastly, we give a quick glimpse of the case $H=\mathbb{R}^d$, $\phi\in C^1(\mathbb{R}^d)$ (instead of requiring $\nabla\phi$ to be Lipschitz as in \cite{liliu2018,fllx18}) so that \eqref{eq:strongsol} becomes the FODE:
\begin{gather}
D_c^{\alpha}u=-\nabla \phi(u).
\end{gather}
The following asymptotic behavior holds when $\phi$ is strongly convex.
\begin{proposition}\label{pro:fracgradientsmooth}
Assume that $\phi\in C^1(\mathbb{R}^d)$ and $\phi-\frac{\mu}{2}|u|^2$ is convex for some $\mu>0$. 
Let $u^*$ be the global minimizer of $\phi$. Then, for some $C$ depending on $u_0,\mu$,
\[
\phi(u(t))-\phi(u^*)\le (\phi(u_0)-\phi(u^*))E_{\alpha}\left(-Ct^{\alpha} \right).
\]
Moreover, $|u(t)-u^*|\le C(1+t)^{-\alpha/2}$.
\end{proposition}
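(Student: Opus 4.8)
The plan is to track the energy $E(t):=\phi(u(t))-\phi(u^*)\ge 0$ and show it obeys a fractional Gr\"onwall-type inequality. The engine is the fractional chain rule for convex $C^1$ functionals, i.e. the continuous analogue of Theorem \ref{thm:implicitcomp}(1),
\[
D_c^{\alpha}\phi(u)(t)\le \nabla\phi(u(t))\cdot D_c^{\alpha}u(t).
\]
Since $u$ solves $D_c^{\alpha}u=-\nabla\phi(u)$, the right-hand side is $-|\nabla\phi(u(t))|^2$, so this reads $D_c^{\alpha}E(t)\le -|\nabla\phi(u(t))|^2$. Next I would invoke the Polyak--{\L}ojasiewicz inequality forced by strong convexity: minimizing the bound $\phi(y)\ge \phi(u)+\nabla\phi(u)\cdot(y-u)+\tfrac{\mu}{2}|y-u|^2$ over $y$ and using that $\phi(u^*)$ is below this minimum yields $|\nabla\phi(u)|^2\ge 2\mu\big(\phi(u)-\phi(u^*)\big)$. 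Combining the two gives the closed inequality $D_c^{\alpha}E(t)\le -2\mu\,E(t)$.

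The first claim then follows from the continuous comparison principle for $D_c^{\alpha}E=f(t,E)$ with the nonincreasing right-hand side $f(E)=-2\mu E$ (e.g.\ \cite[Theorem 2.1]{fllx18note}): comparing with the solution of $D_c^{\alpha}w=-2\mu w$, $w(0)=E(0)$, whose explicit form is $w(t)=E(0)E_{\alpha}(-2\mu t^{\alpha})$, gives $E(t)\le (\phi(u_0)-\phi(u^*))E_{\alpha}(-2\mu t^{\alpha})$, i.e.\ the stated estimate with $C=2\mu$. For the decay of $|u(t)-u^*|$ I would use the other side of strong convexity, $\tfrac{\mu}{2}|u(t)-u^*|^2\le \phi(u(t))-\phi(u^*)$ (valid since $\nabla\phi(u^*)=0$), together with the decay of the Mittag-Leffler function: for $0<\alpha<1$, $E_{\alpha}(-x)$ is completely monotone with $E_{\alpha}(-x)\sim \frac{1}{\Gamma(1-\alpha)x}$ as $x\to\infty$, hence $E_{\alpha}(-x)\le C_\alpha(1+x)^{-1}$ on $[0,\infty)$ (see \cite{HMS}). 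Therefore $|u(t)-u^*|^2\le \tfrac{2}{\mu}(\phi(u_0)-\phi(u^*))E_{\alpha}(-2\mu t^{\alpha})\le C(1+t)^{-\alpha}$, and taking square roots gives $|u(t)-u^*|\le C(1+t)^{-\alpha/2}$.

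The hard part is the rigorous justification of the fractional chain rule along the strong solution, since Theorem \ref{thm:fracgradient} only guarantees that $u$ is H\"older continuous of order $\alpha/2$, while the energy-dissipation inequality is the continuous counterpart of a discrete identity; this is precisely the subtlety flagged for the FSDE, where \eqref{eq:continuousineq} could not be used directly. I would therefore establish the inequality at the discrete level first: applying Theorem \ref{thm:implicitcomp}(1) with $E=\phi$ to the scheme \eqref{eq:discretescheme} gives
\[
(\mathcal{D}^{\alpha}\phi(U))_n\le (\mathcal{D}^{\alpha}U)_n\cdot\nabla\phi(U_n)=-|\nabla\phi(U_n)|^2\le -2\mu\big(\phi(U_n)-\phi(u^*)\big),
\]
so the discrete energy $e_n:=\phi(U_n)-\phi(u^*)$ satisfies $(\mathcal{D}^{\alpha}e)_n\le -2\mu e_n$. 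The discrete comparison principle Theorem \ref{thm:implicitcomp}(2) together with the stability and convergence results of Theorem \ref{thm:schemeforlinear} and Theorem \ref{thm:fracgradient}---noting that $\phi\in C^1$ makes $\partial\phi=\{\nabla\phi\}$ single-valued, so Assumption \ref{ass:lambdaconv} holds automatically---then let me send $k\to0$ and recover $E(t)\le (\phi(u_0)-\phi(u^*))E_{\alpha}(-2\mu t^{\alpha})$, bypassing the need to differentiate $\phi(u)$ fractionally along the continuous trajectory. Alternatively, one may generalize \cite[Proposition 3.11]{liliu2018} to $C^1$ convex functionals evaluated along the fractional integral of a continuous function; both routes deliver the same closed inequality and hence the Proposition.
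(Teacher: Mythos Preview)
Your proposal is correct and, for the first claim, follows essentially the same route as the paper: work at the discrete level via Theorem \ref{thm:implicitcomp}(1) applied to the implicit scheme, invoke the Polyak--{\L}ojasiewicz inequality to close the discrete dissipation $(\mathcal{D}^{\alpha}e)_n\le -2\mu e_n$, compare with the linear scheme using Theorem \ref{thm:implicitcomp}(2) and Theorem \ref{thm:schemeforlinear}, and pass $k\to 0$ using Theorem \ref{thm:fracgradient}; your observation that $\phi\in C^1$ forces $\partial\phi=\{\nabla\phi\}$ and hence Assumption \ref{ass:lambdaconv} is exactly what is needed to invoke that theorem.

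The only genuine difference is in the second claim. The paper repeats the discrete argument for the quantity $|U_n-u^*|^2$, using Theorem \ref{thm:implicitcomp}(1) with $E(u)=|u-u^*|^2$ and the monotonicity $\langle U_n-u^*,\nabla\phi(U_n)\rangle\ge \mu|U_n-u^*|^2$ to obtain $(\mathcal{D}^{\alpha}|U-u^*|^2)_n\le -2\mu|U_n-u^*|^2$, and then again passes to the limit. You instead read off $|u(t)-u^*|$ directly from the energy bound via the strong-convexity lower bound $\tfrac{\mu}{2}|u-u^*|^2\le \phi(u)-\phi(u^*)$. Both are valid; your route is shorter and avoids a second limit passage, while the paper's route is self-contained and yields the slightly sharper constant $|u(t)-u^*|\le |u_0-u^*|\sqrt{E_{\alpha}(-2\mu t^{\alpha})}$ rather than one involving $\phi(u_0)-\phi(u^*)$.
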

\begin{proof}

Consider the implicit scheme
\[
(\mathcal{D}^{\alpha}U)_n=-\nabla\phi(U_n).
\]
Using the first claim in Theorem \ref{thm:implicitcomp}, one has
\[
(\mathcal{D}^{\alpha}\phi(U))_n\le -|\nabla\phi(U_n)|^2\le 0.
\]
Hence $\phi(U_n)$ is bounded. Since $\phi$ is strongly convex and thus $\lim_{R\to\infty}\inf_{|u|\ge R}\phi(u)=+\infty$,  $\{U_n\}$ is in a compact domain $K$ that only depends on $u_0$.
By Theorem \ref{thm:implicitcomp} and the Polyak-Lojasiewicz inequality ($|\nabla \phi(x)|^2\ge 2\mu(\phi(x)-\phi(u^*))$), one has
\[
(\mathcal{D}^{\alpha}(\phi(U)-\phi(u^*)))_n=(\mathcal{D}^{\alpha}\phi(U))_n\le -|\nabla\phi(U_n)|^2\le -2\mu(\phi(U_n)-\phi(u^*)).
\]
The Polyak-Lojasiewicz inequality is obtained by 
\[
\phi(y)\ge \phi(u)+\nabla\phi(u)\cdot(y-u)+\frac{\mu}{2}|y-u|^2\ge \phi(u)-\frac{1}{2\mu}|\nabla \phi(u)|^2.
\]

By the second claim in Theorem \ref{thm:implicitcomp} and the third claim in Theorem \ref{thm:schemeforlinear}, it holds for any $nk\le T$ that
\[
\phi(U_n)-\phi(u^*)\le (\phi(u_0)-\phi(u^*))E_{\alpha}(-2\mu(nk)^{\alpha})+o(k).
\]
Taking $k\to 0$ and by Theorem \ref{thm:fracgradient} (convergence and continuity of $u(t)$), one thus has for any $t\le T$:
\[
\phi(u(t))-\phi(u^*)\le (\phi(u_0)-\phi(u^*))E_{\alpha}(-2\mu t^{\alpha})
\]
Since $T$ is arbitrary, the first claim is true for all $t$. 

Similarly,  $(\mathcal{D}^{\alpha}(U-u^*)^2)_n
\le 2\langle U_n-u^*, -\nabla\phi(U_n)\rangle \le -2\mu |U_n-u^*|^2$.
Theorem \ref{thm:fracgradient} allows us to take $k\to 0$ to obtain $|u(t)-u^*|\le |u_0-u^*|\sqrt{E_{\alpha}(-2\mu t^{\alpha})}$.
Since $E_{\alpha}(-s)\sim C_1s^{-1}$ as $s\to\infty$, the second claim follows.
\end{proof}

\section*{Acknowledgement}
The work of L. Li was partially sponsored by Shanghai Sailing Program 19YF1421300. The work of J.-G. Liu was partially supported by KI-Net NSF RNMS11-07444 and NSF DMS-1812573.

\bibliographystyle{unsrt}
\bibliography{fracgradient}

\end{document}